\newcommand{\nc}{\newcommand}
\renewcommand{\AA}{{\mathbb{A}}}
\nc{\CC}{{\mathbb{C}}}
\nc{\LL}{{\mathbb{L}}}
\nc{\RR}{{\mathbb{R}}}
\renewcommand{\P}{{\mathbb{P}}}
\nc{\PP}{{\mathbb{P}}}
\nc{\OO}{{\mathbb{O}}}
\nc{\QQ}{{\mathbb{Q}}}
\nc{\ZZ}{{\mathbb{Z}}}
\nc{\cA}{{\mathcal{A}}}
\nc{\cB}{{\mathcal{C}\!\ell}}
\nc{\rcB}{\operatorname{\mathbf{Cl}}}
\nc{\cC}{{\mathcal{C}}}
\nc{\cD}{{\mathcal{D}}}
\nc{\cE}{{\mathcal{E}}}
\nc{\cF}{{\mathcal{F}}}
\nc{\cG}{{\mathcal{G}}}
\nc{\cH}{{\mathcal{H}}}
\nc{\cI}{{\mathcal{I}}}
\nc{\cJ}{{\mathcal{J}}}
\nc{\cK}{{\mathcal{K}}}
\nc{\cL}{{\mathcal{L}}}
\nc{\cM}{{\mathcal{M}}}
\nc{\cN}{{\mathcal{N}}}
\nc{\cO}{{\mathcal{O}}}
\nc{\cP}{{\mathcal{P}}}
\nc{\cQ}{{\mathcal{Q}}}
\nc{\cR}{{\mathcal{R}}}
\nc{\cS}{{\mathcal{S}}}
\nc{\cT}{{\mathcal{T}}}
\nc{\cU}{{\mathcal{U}}}
\nc{\cV}{{\mathcal{V}}}
\nc{\cW}{{\mathcal{W}}}
\nc{\cX}{{\mathcal{X}}}
\nc{\cY}{{\mathcal{Y}}}
\nc{\cZ}{{\mathcal{Z}}}
\nc{\bA}{{\mathbf{A}}}
\nc{\bB}{{\mathbf{B}}}
\nc{\bC}{{\mathbf{C}}}
\nc{\bD}{{\mathbf{D}}}
\nc{\bE}{{\mathbf{E}}}
\nc{\bF}{{\mathbf{F}}}
\nc{\bG}{{\mathbf{G}}}
\nc{\bH}{{\mathbf{H}}}
\nc{\bI}{{\mathbf{I}}}
\nc{\bJ}{{\mathbf{J}}}
\nc{\bK}{{\mathbf{K}}}
\nc{\bL}{{\mathbf{L}}}
\nc{\bM}{{\mathbf{M}}}
\nc{\bN}{{\mathbf{N}}}
\nc{\bO}{{\mathbf{O}}}
\nc{\bP}{{\mathbf{P}}}
\nc{\bQ}{{\mathbf{Q}}}
\nc{\bR}{{\mathbf{R}}}
\nc{\bS}{{\mathbf{S}}}
\nc{\bT}{{\mathbf{T}}}
\nc{\bU}{{\mathbf{U}}}
\nc{\bV}{{\mathbf{V}}}
\nc{\bW}{{\mathbf{W}}}
\nc{\bX}{{\mathbf{X}}}
\nc{\bY}{{\mathbf{Y}}}
\nc{\bZ}{{\mathbf{Z}}}
\nc{\ba}{{\mathbf{a}}}
\nc{\bb}{{\mathbf{b}}}
\nc{\bc}{{\mathbf{c}}}
\nc{\bd}{{\mathbf{d}}}
\nc{\be}{{\mathbf{e}}}
\nc{\bg}{{\mathbf{g}}}
\nc{\bh}{{\mathbf{h}}}
\nc{\bi}{{\mathbf{i}}}
\nc{\bj}{{\mathbf{j}}}
\nc{\bk}{{\mathbf{k}}}
\nc{\bl}{{\mathbf{l}}}
\nc{\bm}{{\mathbf{m}}}
\nc{\bn}{{\mathbf{n}}}
\nc{\bo}{{\mathbf{o}}}
\nc{\bp}{{\mathbf{p}}}
\nc{\bq}{{\mathbf{q}}}
\nc{\br}{{\mathbf{r}}}
\nc{\bs}{{\mathbf{s}}}
\nc{\bt}{{\mathbf{t}}}
\nc{\bu}{{\mathbf{u}}}
\nc{\bv}{{\mathbf{v}}}
\nc{\bw}{{\mathbf{w}}}
\nc{\bx}{{\mathbf{x}}}
\nc{\by}{{\mathbf{y}}}
\nc{\bz}{{\mathbf{z}}}
\nc{\fA}{{\mathfrak{A}}}
\nc{\fB}{{\mathfrak{B}}}
\nc{\fC}{{\mathfrak{C}}}
\nc{\fD}{{\mathfrak{D}}}
\nc{\fE}{{\mathfrak{E}}}
\nc{\fF}{{\mathfrak{F}}}
\nc{\fG}{{\mathfrak{G}}}
\nc{\fH}{{\mathfrak{H}}}
\nc{\fI}{{\mathfrak{I}}}
\nc{\fJ}{{\mathfrak{J}}}
\nc{\fK}{{\mathfrak{K}}}
\nc{\fL}{{\mathfrak{L}}}
\nc{\fM}{{\mathfrak{M}}}
\nc{\fN}{{\mathfrak{N}}}
\nc{\fO}{{\mathfrak{O}}}
\nc{\fP}{{\mathfrak{P}}}
\nc{\fQ}{{\mathfrak{Q}}}
\nc{\fR}{{\mathfrak{R}}}
\nc{\fS}{{\mathfrak{S}}}
\nc{\fT}{{\mathfrak{T}}}
\nc{\fU}{{\mathfrak{U}}}
\nc{\fV}{{\mathfrak{V}}}
\nc{\fW}{{\mathfrak{W}}}
\nc{\fX}{{\mathfrak{X}}}
\nc{\fY}{{\mathfrak{Y}}}
\nc{\fZ}{{\mathfrak{Z}}}
\nc{\fa}{{\mathfrak{a}}}
\nc{\fb}{{\mathfrak{b}}}
\nc{\fc}{{\mathfrak{c}}}
\nc{\fd}{{\mathfrak{d}}}
\nc{\fe}{{\mathfrak{e}}}
\nc{\ff}{{\mathfrak{f}}}
\nc{\fg}{{\mathfrak{g}}}
\nc{\fh}{{\mathfrak{h}}}
\nc{\fj}{{\mathfrak{j}}}
\nc{\fk}{{\mathfrak{k}}}
\nc{\fl}{{\mathfrak{l}}}
\nc{\fm}{{\mathfrak{m}}}
\nc{\fn}{{\mathfrak{n}}}
\nc{\fo}{{\mathfrak{o}}}
\nc{\fp}{{\mathfrak{p}}}
\nc{\fq}{{\mathfrak{q}}}
\nc{\fr}{{\mathfrak{r}}}
\nc{\fs}{{\mathfrak{s}}}
\nc{\ft}{{\mathfrak{t}}}
\nc{\fu}{{\mathfrak{u}}}
\nc{\fv}{{\mathfrak{v}}}
\nc{\fw}{{\mathfrak{w}}}
\nc{\fx}{{\mathfrak{x}}}
\nc{\fy}{{\mathfrak{y}}}
\nc{\fz}{{\mathfrak{z}}}
\nc{\sA}{{\mathsf{A}}}
\nc{\sB}{{\mathsf{B}}}
\nc{\sC}{{\mathsf{C}}}
\nc{\sD}{{\mathsf{D}}}
\nc{\sE}{{\mathsf{E}}}
\nc{\sF}{{\mathsf{F}}}
\nc{\sG}{{\mathsf{G}}}
\nc{\sH}{{\mathsf{H}}}
\nc{\sI}{{\mathsf{I}}}
\nc{\sJ}{{\mathsf{J}}}
\nc{\sK}{{\mathsf{K}}}
\nc{\sL}{{\mathsf{L}}}
\nc{\sM}{{\mathsf{M}}}
\nc{\sN}{{\mathsf{N}}}
\nc{\sO}{{\mathsf{O}}}
\nc{\sP}{{\mathsf{P}}}
\nc{\sQ}{{\mathsf{Q}}}
\nc{\sR}{{\mathsf{R}}}
\nc{\sS}{{\mathsf{S}}}
\nc{\sT}{{\mathsf{T}}}
\nc{\sU}{{\mathsf{U}}}
\nc{\sV}{{\mathsf{V}}}
\nc{\sW}{{\mathsf{W}}}
\nc{\sX}{{\mathsf{X}}}
\nc{\sY}{{\mathsf{Y}}}
\nc{\sZ}{{\mathsf{Z}}}
\nc{\sa}{{\mathsf{a}}}
\nc{\sd}{{\mathsf{d}}}
\nc{\se}{{\mathsf{e}}}
\nc{\sg}{{\mathsf{g}}}
\nc{\sh}{{\mathsf{h}}}
\nc{\si}{{\mathsf{i}}}
\nc{\sj}{{\mathsf{j}}}
\nc{\sk}{{\mathsf{k}}}
\nc{\sm}{{\mathsf{m}}}
\nc{\sn}{{\mathsf{n}}}
\nc{\so}{{\mathsf{o}}}
\nc{\sq}{{\mathsf{q}}}
\nc{\sr}{{\mathsf{r}}}
\nc{\st}{{\mathsf{t}}}
\nc{\su}{{\mathsf{u}}}
\nc{\sv}{{\mathsf{v}}}
\nc{\sw}{{\mathsf{w}}}
\nc{\sx}{{\mathsf{x}}}
\nc{\sy}{{\mathsf{y}}}
\nc{\sz}{{\mathsf{z}}}
\nc{\oA}{{\overline{A}}}
\nc{\oB}{{\overline{B}}}
\nc{\oC}{{\overline{C}}}
\nc{\oD}{{\overline{D}}}
\nc{\oE}{{\overline{E}}}
\nc{\oF}{{\overline{F}}}
\nc{\oG}{{\overline{G}}}
\nc{\oH}{{\overline{H}}}
\nc{\oI}{{\overline{I}}}
\nc{\oJ}{{\overline{J}}}
\nc{\oK}{{\overline{K}}}
\nc{\oL}{{\overline{L}}}
\nc{\oM}{{\overline{M}}}
\nc{\oN}{{\overline{N}}}
\nc{\oO}{{\overline{O}}}
\nc{\oP}{{\overline{P}}}
\nc{\oQ}{{\overline{Q}}}
\nc{\oR}{{\overline{R}}}
\nc{\oS}{{\overline{S}}}
\nc{\oT}{{\overline{T}}}
\nc{\oU}{{\overline{U}}}
\nc{\oV}{{\overline{V}}}
\nc{\oW}{{\overline{W}}}
\nc{\oX}{{\overline{X}}}
\nc{\oY}{{\overline{Y}}}
\nc{\oZ}{{\overline{Z}}}
\nc{\oa}{{\overline{a}}}
\nc{\ob}{{\overline{b}}}
\nc{\oc}{{\overline{c}}}
\nc{\od}{{\overline{d}}}
\nc{\of}{{\overline{f}}}
\nc{\og}{{\overline{g}}}
\nc{\oh}{{\overline{h}}}
\nc{\oi}{{\overline{i}}}
\nc{\oj}{{\overline{j}}}
\nc{\ok}{{\overline{k}}}
\nc{\ol}{{\overline{l}}}
\nc{\om}{{\overline{m}}}
\nc{\on}{{\overline{n}}}
\nc{\oo}{{\overline{o}}}
\nc{\op}{{\overline{p}}}
\nc{\oq}{{\overline{q}}}
\nc{\os}{{\overline{s}}}
\nc{\ot}{{\overline{t}}}
\nc{\ou}{{\overline{u}}}
\nc{\ov}{{\overline{v}}}
\nc{\ow}{{\overline{w}}}
\nc{\ox}{{\overline{x}}}
\nc{\oy}{{\overline{y}}}
\nc{\oz}{{\overline{z}}}
\nc{\tA}{{\tilde{A}}}
\nc{\tB}{{\tilde{B}}}
\nc{\tC}{{\tilde{C}}}
\nc{\tD}{{\tilde{D}}}
\nc{\tE}{{\tilde{E}}}
\nc{\tF}{{\tilde{F}}}
\nc{\tG}{{\tilde{G}}}
\nc{\tH}{{\tilde{H}}}
\nc{\tI}{{\tilde{I}}}
\nc{\tJ}{{\tilde{J}}}
\nc{\tK}{{\tilde{K}}}
\nc{\tL}{{\tilde{L}}}
\nc{\tM}{{\tilde{M}}}
\nc{\tN}{{\tilde{N}}}
\nc{\tO}{{\tilde{O}}}
\nc{\tP}{{\tilde{P}}}
\nc{\tQ}{{\tilde{Q}}}
\nc{\tR}{{\tilde{R}}}
\nc{\tS}{{\tilde{S}}}
\nc{\tT}{{\tilde{T}}}
\nc{\tU}{{\tilde{U}}}
\nc{\tV}{{\tilde{V}}}
\nc{\tW}{{\tilde{W}}}
\nc{\tX}{{\tilde{X}}}
\nc{\tY}{{\tilde{Y}}}
\nc{\tZ}{{\tilde{Z}}}
\nc{\ta}{{\tilde{a}}}
\nc{\tb}{{\tilde{b}}}
\nc{\tc}{{\tilde{c}}}
\nc{\td}{{\tilde{d}}}
\nc{\te}{{\tilde{e}}}
\nc{\tf}{{\tilde{f}}}
\nc{\tg}{{\tilde{g}}}
\nc{\ti}{{\tilde{i}}}
\nc{\tj}{{\tilde{j}}}
\nc{\tk}{{\tilde{k}}}
\nc{\tl}{{\tilde{l}}}
\nc{\tm}{{\tilde{m}}}
\nc{\tn}{{\tilde{n}}}
\nc{\tp}{{\tilde{p}}}
\nc{\tq}{{\tilde{q}}}
\nc{\tr}{{\tilde{r}}}
\nc{\ts}{{\tilde{s}}}
\nc{\tu}{{\tilde{u}}}
\nc{\tv}{{\tilde{v}}}
\nc{\tw}{{\tilde{w}}}
\nc{\tx}{{\tilde{x}}}
\nc{\ty}{{\tilde{y}}}
\nc{\tz}{{\tilde{z}}}
\nc{\hA}{{\hat{A}}}
\nc{\hB}{{\hat{B}}}
\nc{\hC}{{\hat{C}}}
\nc{\hD}{{\hat{D}}}
\nc{\hE}{{\hat{E}}}
\nc{\hF}{{\hat{F}}}
\nc{\hG}{{\hat{G}}}
\nc{\hH}{{\hat{H}}}
\nc{\hI}{{\hat{I}}}
\nc{\hJ}{{\hat{J}}}
\nc{\hK}{{\hat{K}}}
\nc{\hL}{{\hat{L}}}
\nc{\hM}{{\hat{M}}}
\nc{\hN}{{\hat{N}}}
\nc{\hO}{{\hat{O}}}
\nc{\hP}{{\hat{P}}}
\nc{\hQ}{{\hat{Q}}}
\nc{\hR}{{\hat{R}}}
\nc{\hS}{{\hat{S}}}
\nc{\hT}{{\hat{T}}}
\nc{\hU}{{\hat{U}}}
\nc{\hV}{{\hat{V}}}
\nc{\hW}{{\hat{W}}}
\nc{\hX}{{\hat{X}}}
\nc{\hY}{{\hat{Y}}}
\nc{\hZ}{{\hat{Z}}}
\nc{\ha}{{\hat{a}}}
\nc{\hb}{{\hat{b}}}
\nc{\hc}{{\hat{c}}}
\nc{\hd}{{\hat{d}}}
\nc{\he}{{\hat{e}}}
\nc{\hf}{{\hat{f}}}
\nc{\hg}{{\hat{g}}}
\nc{\hh}{{\hat{h}}}
\nc{\hi}{{\hat{i}}}
\nc{\hj}{{\hat{j}}}
\nc{\hk}{{\hat{k}}}
\nc{\hl}{{\hat{l}}}
\nc{\hm}{{\hat{m}}}
\nc{\hn}{{\hat{n}}}
\nc{\ho}{{\hat{o}}}
\nc{\hp}{{\hat{p}}}
\nc{\hq}{{\hat{q}}}
\nc{\hr}{{\hat{r}}}
\nc{\hs}{{\hat{s}}}
\nc{\hu}{{\hat{u}}}
\nc{\hv}{{\hat{v}}}
\nc{\hw}{{\hat{w}}}
\nc{\hx}{{\hat{x}}}
\nc{\hy}{{\hat{y}}}
\nc{\hz}{{\hat{z}}}
\nc{\rA}{{\mathrm{A}}}
\nc{\rB}{{\mathrm{B}}}
\nc{\rD}{{\mathrm{D}}}
\nc{\rE}{{\mathrm{E}}}
\nc{\rH}{{\mathrm{H}}}
\nc{\rR}{{\mathrm{R}}}
\nc{\rT}{{\mathrm{T}}}
\nc{\eps}{\varepsilon}
\nc{\lan}{\big\langle}
\nc{\ran}{\big\rangle}
\nc{\kk}{\Bbbk}
\def\bw#1#2{{\textstyle{\bigwedge\hskip-0.9mm^{#1}}\hskip0.2mm{#2}}}
\DeclareMathOperator{\Hom}{\mathrm{Hom}}
\DeclareMathOperator{\Ext}{\mathrm{Ext}}
\DeclareMathOperator{\rd}{d}
\DeclareMathOperator{\cHom}{\mathcal{H}\mathit{om}}
\DeclareMathOperator{\cRHom}{\mathrm{R}\mathcal{H}\mathit{om}}
\DeclareMathOperator{\Spec}{\mathrm{Spec}}
\DeclareMathOperator{\Sing}{\mathrm{Sing}}
\DeclareMathOperator{\Sym}{\mathrm{Sym}}
\DeclareMathOperator{\Ker}{\mathrm{Ker}}
\DeclareMathOperator{\Ima}{\mathrm{Im}}
\DeclareMathOperator{\YD}{YD}
\DeclareMathOperator{\SYD}{SYD}
\DeclareMathOperator{\Gr}{\mathrm{Gr}}
\DeclareMathOperator{\OGr}{\mathrm{OGr}}
\DeclareMathOperator{\Gm}{\mathbb{G}_{\mathrm{m}}}
\DeclareMathOperator{\GL}{\mathrm{GL}}
\DeclareMathOperator{\tGL}{\widetilde{\mathrm{GL}}}
\DeclareMathOperator{\PGL}{\mathrm{PGL}}
\DeclareMathOperator{\SO}{\mathrm{SO}}
\DeclareMathOperator{\Spin}{\mathrm{Spin}}
\DeclareMathOperator{\id}{\mathrm{id}}
\DeclareMathOperator{\qgr}{qgr}
\DeclareMathOperator{\coh}{coh}
\nc{\Db}{{\mathbf{D}}}
\nc{\diag}{\mathrm{diag}}
\nc{\fSo}{\fS^\circ}
\nc{\DV}{W}
\theoremstyle{plain}
\newtheorem{theorem}{Theorem}[section]
\newtheorem{lemma}[theorem]{Lemma}
\newtheorem{proposition}[theorem]{Proposition}
\newtheorem{corollary}[theorem]{Corollary}
\theoremstyle{definition}
\newtheorem{definition}[theorem]{Definition}
\newtheorem{example}[theorem]{Example}
\theoremstyle{remark}
\newtheorem{remark}[theorem]{Remark}
\newenvironment{alenumerate}{\begin{enumerate}[label={\textup{(\alph*)}}]}{\end{enumerate}}
\title{Clifford spaces of empty intersections of quadrics}
\author{Alexander Kuznetsov}
\address{{\sloppy
\parbox{0.9\textwidth}{
Algebraic Geometry Section, Steklov Mathematical Institute of Russian Academy of Sciences,\\
8 Gubkin str., Moscow 119991 Russia
}\bigskip}}
\email{akuznet@mi-ras.ru}
\date{}
\begin{document}

\begin{abstract}
Given a linear space~$U \subset \Sym^2V^\vee$ of quadrics
in a projective space~$\P(V)$ whose intersection is empty,
we consider the corresponding {\sf Clifford space} ---
the projective space~$\P(U)$ endowed with the even part of Clifford algebras as a sheaf of algebras.
We show that the derived category of a Clifford space is generated by a full exceptional collection that extends to a 1-periodic helix
and the Clifford space is equivalent to the noncommutative projective spectrum of the corresponding graded algebra.

We discuss two special cases of Clifford spaces in more detail.

The first is the {\sf maximal Clifford space}, associated to the complete linear system~$U = \Sym^2V^\vee$ of quadrics.
It is homologically projectively dual to the second Veronese embedding of the projective space~$\P(V)$.
We show that the corresponding graded algebra is the maximal multiplicity-free direct sum
of all polynomial representations of~$\GL(V)$ and describe its dual $\rA_\infty$-algebra.

The second is a {\sf minimal Clifford space}, associated to a linear system of quadrics with~$\dim(U) = \dim(V)$.
We show that the corresponding graded algebra is a Koszul flat deformation of a polynomial algebra
and its dual algebra is a Frobenius flat deformation of an exterior algebra.
In particular, a minimal Clifford space is an example of a noncommutative projective space.
\end{abstract}

\maketitle

\section{Introduction}

Complete intersections of quadrics form a very important class of varieties studied in algebraic geometry,
see for instance the classical papers 
of Reid~\cite{Reid}, discussing a complete intersection of two quadrics (with a recent update in~\cite{CT}),
and Tyurin~\cite{Tyurin} and O'Grady~\cite{OG86} for the case of three quadrics.
The classical construction discussed in these papers 
relates the geometric properties of a complete intersection~$X_U \subset \P(V)$
of a $k$-dimensional linear space~$U \subset \Sym^2V^\vee$ of quadrics
in a projective space~$\P(V)$ of dimension~$n - 1$
to the degree~$n$ discriminant divisor~$\Delta \subset \P(U)$ (parameterizing singular quadrics in~$U$)
and the double covering~$\tilde\Delta \to \Delta$ if~$n$ is odd
or~$\widetilde\P(U) \to \P(U)$ if~$n$ is even.

A more general perspective was offered in~\cite{K08}, where a categorical version of this relation was discovered.
In particular, in the case where~$k < n/2$
(so that~$X_U$ is a Fano variety of index~$n - 2k$ and dimension~$n - k - 1$)
\cite[Theorem~5.5]{K08} provided a semiorthogonal decomposition
\begin{equation}
\label{eq:dbxw}
\Db(X_U) = \Big\langle \Db(\P(U), \cB_0(V)), \cO_{X_U}, \cO_{X_U}(1), \dots, \cO_{X_U}(n-2k-1) \Big\rangle,
\end{equation}
where the first component is the derived category of coherent sheaves of modules 
over the sheaf~$\cB_0(V)$ of even parts of Clifford algebras on~$\P(U)$ corresponding to the quadrics parameterized by~$U$,
and what stands to its right is an exceptional collection of line bundles of length~$n - 2k$.

The algebra~$\cB_0(V)$ is Azumaya on~$\P(U) \setminus \Delta$ if~$n$ is odd,
and is a pushforward of an Azumaya algebra from the complement~$\widetilde\P(U) \setminus \Delta$
of the ramification divisor of the double covering if~$n$ is even, see~\cite[\S\S3.5--3.6]{K08}.
So, in a sense, the classical discriminant data of a family of quadrics
(the divisor~$\Delta$ and the appropriate double covering)
are encoded in the structure of the sheaf~$\cB_0(V)$ of even Clifford algebras.

Similarly, in the case where~$k > n/2$ (so that~$X_U$ is a variety of general type),
\cite[Theorem~5.5]{K08} provided the ``opposite'' semiorthogonal decomposition
\begin{equation}
\label{eq:dbpw-cl}
\Db(\P(U), \cB_0(V)) = \Big\langle \cB_{n-2k+1}(V), \dots, \cB_{-1}(V), \cB_0(V), \Db(X_U) \Big\rangle,
\end{equation}
where~$\cB_i(V)$ are sheaves of~$\cB_0(V)$-modules that form an exceptional sequence of length~$2k - n$.

\subsection{Clifford spaces}

In this paper we discuss the extreme cases of intersections of quadrics satisfying~\eqref{eq:dbpw-cl},
namely those, where~$k \ge n$, so that~$X_U = \varnothing$.
The corresponding category~$\Db(\P(U), \cB_0(V))$, in a sense,
describes the geometry of the empty intersection of quadrics,

By~\eqref{eq:dbpw-cl}, the category~$\Db(\P(U), \cB_0(V))$ has a full exceptional collection of length~$2k - n$:
\begin{equation}
\label{eq:intro-db-clifford}
\Db(\P(U), \cB_0(V)) = \Big\langle \cB_{1-2k+n}(V), \dots, \cB_{-1}(V), \cB_0(V) \Big\rangle.
\end{equation}
It looks visually similar to the Beilinson's exceptional collection of a projective space.
The analogy becomes even more appealing, if we take into account
that there is an autoequivalence of~$\Db(\P(U), \cB_0(V))$ that takes every~$\cB_i(V)$ to~$\cB_{i+1}(V)$,
and as we will see below, the analogy goes even further.

Recall that the standard exceptional collection of a projective space~$\P^{m-1}$
extends to an infinite sequence of exceptional objects (known as a {\sf helix})~$\{ \cO_{\P^{m-1}}(i),\ i \in \ZZ \}$
the defining property of which is that
each subsequence~$\cO_{\P^{m-1}}(i)$, $\cO_{\P^{m-1}}(i+1)$, \dots, $\cO_{\P^{m-1}}(i+m-1)$ of length~$m$
is a full exceptional collection.
This helix is periodic (in an obvious sense) and strong, and its graded algebra
\begin{equation*}
\bigoplus_{i=0}^\infty \Ext^\bullet(\cO_{\P^{m-1}}, \cO_{\P^{m-1}}(i)) \cong
\bigoplus_{i=0}^\infty \Hom(\cO_{\P^{m-1}}, \cO_{\P^{m-1}}(i))
\end{equation*}
is isomorphic to the graded coordinate algebra of the projective space.
The exceptional collection~\eqref{eq:intro-db-clifford} behaves similarly:
it extends to a periodic strong helix~$\{ \cB_i(V),\ i \in \ZZ \}$
(see Theorem~\ref{thm:clifford-helix})
and the corresponding graded algebra
\begin{equation}
\label{eq:algebra-clms}
\bB_U \coloneqq \bigoplus_{i=0}^\infty \Ext^\bullet_{\cB_0(V)}(\cB_0(V), \cB_i(V)) \cong
\bigoplus_{i=0}^\infty \Hom_{\cB_0(V)}(\cB_0(V), \cB_i(V))
\end{equation}
should be thought of as its coordinate algebra.
Indeed, an analog of Serre's theorem (Theorem~\ref{thm:clifford-algebra}) holds in this case,
giving an equivalence of abelian categories
\begin{equation*}
\coh(\P(U), \cB_0(V)) \simeq \qgr(\bB_U),
\end{equation*}
where the left side is the category of coherent sheaves of~$\cB_0(V)$-modules on~$\P(U)$
and the right side is the quotient of the category of finitely generated graded~$\bB_U$-modules
by finite-dimensional modules.

Thus, $(\P(U), \cB_0(V))$ can be thought of as a noncommutative variety; we call it {\sf a Clifford space}.
In Proposition~\ref{prop:as-regularity} we show that the coordinate algebra~$\bB_U$ of any Clifford space
satisfies an important homological property:
it is \emph{Artin--Schelter regular} of dimension~$k = \dim(U)$.

\subsection{The maximal Clifford space}

The Clifford space that corresponds to the space of \emph{all quadrics},
i.e., $U = \Sym^2V^\vee$, is called {\sf the maximal Clifford space}.
From the homological projective duality point of view (see~\cite{K07} and~\cite{K14}), 
the maximal Clifford space is homologically projectively dual
to the second Veronese embedding of the projective space~$\P(V)$.

Since~$k = \dim(\Sym^2V^\vee) = n(n+1)/2$, the length of the collection~\eqref{eq:intro-db-clifford} is~$2k - n = n(n+1) - n = n^2$,
and the collection itself (we abbreviate~$\cB_i(V)$ to simply~$\cB_i$) looks as follows:
\begin{equation*}
\label{eq:db-clms}
\Db(\P(\Sym^2V^\vee), \cB_0) = \Big\langle \cB_{1-n^2}, \dots, \cB_{-1}, \cB_0 \Big\rangle.
\end{equation*}
We write~$\bB = \bB_{\Sym^2(V^\vee)}$ for the coordinate algebra of the maximal Clifford space.

To give a description of the algebra~$\bB$ note that the group~$\GL(V)$
acts on~$\P(\Sym^2V^\vee)$ and all the sheaves~$\cB_i$ are $\GL(V)$-equivariant;
therefore~$\GL(V)$ acts on the algebra~$\bB$.
Recall also that the irreducible polynomial representations of~$\GL(V)$ are indexed
by the Young diagrams with at most~$n$ rows; we denote by~$\YD_n$ the set of such Young diagrams,
by~$\Sigma^\alpha V$ the irreducible representation of~$\GL(V)$ corresponding to~$\alpha \in \YD_n$,
and by~$|\alpha|$ the number of boxes in~$\alpha$.

\begin{theorem}
\label{thm:intro-bb}
The coordinate algebra~$\bB$ of the maximal Clifford space
is the multiplicity free direct sum of all irreducible polynomial representations of~$\GL(V)$.
More precisely,
\begin{equation*}
\bB = \bigoplus_{i=0}^\infty \bB_i \,\cong\,
\bigoplus_{i=0}^\infty \left( \bigoplus_{\alpha \in \YD_n,\ |\alpha| = i} \Sigma^\alpha V \right),
\end{equation*}
where~$\bB_i$ is the $i$-th graded component of~$\bB$.
\end{theorem}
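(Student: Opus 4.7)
The plan is to compute $\bB_i = \Ext_{\cB_0}^\bullet(\cB_0,\cB_i)$ as sheaf cohomology on $\P(\Sym^2V^\vee)$, using that $\cB_i$ has an explicit description as a direct sum of line bundles tensored with exterior powers of $V$, and then to recognize the resulting $\GL(V)$-module via classical Pieri-type combinatorics.

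The first step is to observe that $\Ext_{\cB_0}^\bullet(\cB_0, \cF) \cong \Ho^\bullet(\P(U), \cF)$ for any coherent $\cB_0$-module $\cF$, so $\bB_i \cong \Ho^\bullet(\P(U),\cB_i)$. Next, the Clifford filtration of $\cB_i$ has $\GL(V)$-equivariant associated graded
$$\gr \cB_i(V) \;\cong\; \bigoplus_{\substack{j \ge 0 \\ j \equiv i \,(\mathrm{mod}\,2)}} \Lambda^j V \otimes \cO_{\P(U)}\!\bigl(\tfrac{i-j}{2}\bigr),$$
which even splits equivariantly in characteristic zero. Since the cohomology depends only on the underlying equivariant sheaf, not on the Clifford multiplication, this reduces the problem to the cohomology of a sum of twisted line bundles on the projective space $\P(U) = \P^{N-1}$ with $N = n(n+1)/2$.

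In the second step, I would apply the standard cohomology of $\cO(m)$ on $\P^{N-1}$: only $\Ho^0 \cong \Sym^m(\Sym^2 V)$ contributes for $m \ge 0$, and both $\Ho^0$ and the top cohomology vanish for $-N < m < 0$. For $i \ge 0$ and $0 \le j \le n$ the exponent $(i-j)/2$ stays above $-n/2 > -N$ (the case $n = 1$ being trivial), so only the summands with $j \le i$ survive, yielding
$$\bB_i \;\cong\; \bigoplus_{\ell = 0}^{\lfloor i/2 \rfloor} \Lambda^{i - 2\ell} V \otimes \Sym^\ell(\Sym^2 V)$$
as $\GL(V)$-representations, with no higher Ext.

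Finally, I would match this sum with $\bigoplus_{\alpha \in \YD_n,\ |\alpha| = i} \Sigma^\alpha V$ by representation theory. The Cauchy identity gives $\Sym^\ell(\Sym^2 V) \cong \bigoplus_{|\alpha| = 2\ell,\ \alpha\ \text{even}} \Sigma^\alpha V$, and the Pieri rule $\Sigma^{(1^m)}V \otimes \Sigma^\alpha V \cong \bigoplus_{\beta/\alpha\ \text{vertical strip of size } m} \Sigma^\beta V$ then packages $\bB_i$ as a sum of $\Sigma^\beta V$'s indexed by pairs $(\alpha,\beta)$ with $\alpha$ even, $\beta \supseteq \alpha$, and $\beta/\alpha$ a vertical strip. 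The combinatorial observation driving the proof is that every $\beta \in \YD_n$ is obtained from a \emph{unique} even partition $\alpha$ in this way, namely $\alpha_k = 2\lfloor \beta_k/2 \rfloor$; this gives both the multiplicity-one statement and the exhaustive enumeration of all diagrams in $\YD_n$ of size $i$. The representation-theoretic matching is the main content of the argument, but relies only on classical and well-known identities, so no substantive obstacle is anticipated.
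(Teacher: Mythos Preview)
Your proposal is correct and follows essentially the same route as the paper's own proof (Theorem~\ref{thm:clifford-master}): reduce~$\bB_i$ to sheaf cohomology of~$\cB_i$ on~$\P(\Sym^2V^\vee)$, obtain the expression~$\bigoplus_{\ell}\Lambda^{i-2\ell}V\otimes\Sym^\ell(\Sym^2V)$, decompose~$\Sym^\ell(\Sym^2V)$ into Schur functors indexed by even partitions, apply Pieri, and conclude via the bijection~$\alpha\mapsto(\lfloor\alpha_t/2\rfloor)_t$. The only cosmetic difference is that the paper \emph{defines}~$\cB_i$ as the direct sum~\eqref{eq:cliff-i-again}, so no splitting argument is needed.
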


It is interesting to compare this description with the structure of the tensor algebra of~$V$:
\begin{equation*}
\bT(V) = 
\bigoplus_{i=0}^\infty V^{\otimes i} \,\cong\,
\bigoplus_{i=0}^\infty \left( \bigoplus_{\alpha \in \YD_n,\ |\alpha| = i} \rR_\alpha \otimes \Sigma^\alpha V \right),
\end{equation*}
where~$\rR_\alpha$ is the irreducible representation of the symmetric group~$\fS_i$ corresponding to~$\alpha$.

To describe the multiplication in~$\bB$
denote by~$\ell_\diag(\alpha)$ the length of the main diagonal of~$\alpha$, i.e.,
\begin{equation*}
\ell_\diag(\alpha) = \max\{i \mid \alpha_i \ge i \}.
\end{equation*}
Recall that a Young diagram~$\alpha$ is {\sf symmetric} 
if~$\alpha^T = \alpha$, where~$\alpha^T$ is the transposition of~$\alpha$;
the set of all symmetric Young diagrams with at most~$n$ rows and columns is denoted by~$\SYD_n$.
Note that for any~$\alpha \in \SYD_n$ the sum~$|\alpha| + \ell_\diag(\alpha)$ is even.

\begin{theorem}
\label{thm:intro-syzygies}
There is a $\GL(V)$-equivariant exact sequence of graded $\bB$-modules
\begin{equation}
\label{eq:simple-resolution}
0 \to 
\bF_{n(n+1)/2} \xrightarrow{\quad}
\bF_{n(n+1)/2-1} \xrightarrow{\quad}
\dots \xrightarrow{\ \rd_2\ } 
\bF_1 \xrightarrow{\ \rd_1\ } 
\bF_0 \to 
\kk \to 0,
\end{equation}
where~$\kk \coloneqq \bB/\bB_{\ge 1}$ is the simple $\bB$-module and~$\bF_i$ are free graded $\bB$-modules defined by
\begin{equation}
\label{eq:terms}
\bF_i \coloneqq \bigoplus_{\substack{\alpha \in \SYD_n \\[.5ex]|\alpha| + \ell_{\diag}(\alpha) = 2i}} 
\Sigma^\alpha V \otimes \bB(- |\alpha|)
\end{equation}
where~$\bB(-k)$ is the free graded $\bB$-module with the grading shifted by~$k$.
\end{theorem}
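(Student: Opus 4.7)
The sequence~\eqref{eq:simple-resolution} is a $\GL(V)$-equivariant graded free resolution of the simple $\bB$-module~$\kk$; its terms therefore encode the bigraded $\GL(V)$-characters of $\Tor^\bB_\bullet(\kk,\kk)$, equivalently of the dual $\rA_\infty$-algebra $\bB^!=\Ext^\bullet_\bB(\kk,\kk)$ alluded to in the abstract. My plan is first to pin down the Betti numbers via an Euler-characteristic identity, then to construct a complex with the prescribed terms, and finally to establish its exactness.

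\emph{Step 1 (Betti numbers).}
By Theorem~\ref{thm:intro-bb} combined with the classical Cauchy--Littlewood identity $\sum_{\beta\in\YD_n}s_\beta(x) = \prod_i(1-x_i)^{-1}\prod_{i<j}(1-x_ix_j)^{-1}$, the $\GL(V)\times\Gm$-character of~$\bB$ equals $\prod_i(1-qx_i)^{-1}\prod_{i<j}(1-q^2x_ix_j)^{-1}$, with $q$ tracking internal degree. Its reciprocal in the character ring is
\begin{equation*}
\prod_i(1-qx_i)\prod_{i<j}(1-q^2x_ix_j) \;=\; \sum_{\alpha\in\SYD_n}(-1)^{(|\alpha|+\ell_{\diag}(\alpha))/2}\,s_\alpha(x)\,q^{|\alpha|},
\end{equation*}
a Littlewood identity indexed by symmetric diagrams: the Frobenius coordinates $(a_1,\dots,a_r\mid a_1,\dots,a_r)$ of a symmetric $\alpha$ select the factors on the left. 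Comparing with the alternating character of~\eqref{eq:terms} fixes the Betti numbers and forces the resolution length to be exactly $n(n+1)/2$, attained at the square $\alpha=(n^n)$.

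\emph{Step 2 (Construction of the complex).}
To realise the complex I would combine two ingredients. The geometric ingredient, via Theorem~\ref{thm:clifford-algebra}, is to work inside $\coh(\P(U),\cB_0(V))$ and build a locally free complex whose image under the equivalence $\coh(\P(U),\cB_0(V))\simeq\qgr(\bB)$ corresponds to~$\kk$. A natural candidate is a Koszul-type complex built from the universal quadric $\cQ=\{(q,[v])\in\P(U)\times\P(V) : q(v)=0\}$ together with Clifford multiplication along its isotropic tautological line $\cO_{\cQ}(-1,-1)\hookrightarrow V\otimes\cO_{\P(U)\times\P(V)}$. Applying Borel--Weil--Bott along the $\P(V)$-factor and invoking multiplicity-freeness of~$\bB$ (Theorem~\ref{thm:intro-bb}) identifies the Schur components of the resulting free graded $\bB$-modules with the symmetric Young diagrams appearing in~\eqref{eq:terms}. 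An alternative, more algebraic route is to determine~$\bB^!$ directly, in which case the resolution emerges from the $\rA_\infty$-bar construction with terms automatically of the predicted shape by Step~1. In both approaches Schur's lemma combined with the multiplicity-free structure of~$\bB$ makes the $\GL(V)$-equivariant differentials essentially unique.

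\emph{Step 3 (Exactness---the main obstacle).}
The Euler-characteristic identity of Step~1 controls only the \emph{virtual} homology of the complex, not the homology itself; genuine cancellations between Schur summands placed in different homological positions must be excluded. The cleanest route is to combine the multiplicity-free character of~$\bB$ with a vanishing argument on the geometric side: if the Koszul-type complex constructed in Step~2 had nontrivial positive-degree homology, the corresponding $\GL(V)$-subrepresentations would have to be matched by equal contributions elsewhere, but the multiplicity-one appearance of each $\Sigma^\beta V$ in~$\bB$, together with Borel--Weil--Bott vanishing on the relevant partial flag bundles of~$V$, severely restricts such matchings and ultimately rules them out. An alternative path is to prove that~$\bB$ is $\rA_\infty$-Koszul with dual~$\bB^!$ of the shape $\bB^!_i\cong\bigoplus_{\alpha\in\SYD_n,\,|\alpha|+\ell_{\diag}(\alpha)=2i}(\Sigma^\alpha V)^\vee$: the minimal $\rA_\infty$-bar resolution of~$\kk$ is then automatically exact and has the terms~\eqref{eq:terms}, so by uniqueness of the minimal free resolution it coincides with~\eqref{eq:simple-resolution}.
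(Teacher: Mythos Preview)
Your Step~1 is correct and elegant: the Littlewood identity does pin down the Euler characteristic of the putative resolution, and this is a useful sanity check. But Steps~2 and~3 are not proofs as written, and the approach is genuinely different from the paper's.

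\textbf{The gap.} In Step~2 the phrase ``Koszul-type complex built from the universal quadric~$\cQ\subset\P(U)\times\P(V)$ together with Clifford multiplication along its isotropic tautological line'' does not specify a complex, and it is not at all clear why Borel--Weil--Bott along the~$\P(V)$-factor would produce exactly the \emph{symmetric} Young diagrams with the cohomological shift~$(|\alpha|+\ell_{\diag}(\alpha))/2$; nothing in that picture singles out~$\SYD_n$ or the diagonal length. In Step~3, multiplicity-freeness of~$\bB$ does \emph{not} forbid cancellations in homology: a complex of free~$\bB$-modules can have multiplicity-free terms and still have nontrivial homology whose $\GL(V)$-pieces cancel in the Euler characteristic. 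So the exactness argument is a hope, not an argument. The alternative ``prove~$\bB$ is $\rA_\infty$-Koszul with~$\bB^!$ of the claimed shape'' is exactly the content of the theorem you are trying to prove.

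\textbf{What the paper does instead.} The paper's construction is entirely different and avoids both difficulties at once. It works on the Grassmannian~$\Gr(n,\DV)$ with~$\DV=V\oplus V^\vee$, where there is an explicit $\Spin(\DV)$-equivariant locally free resolution (from~\cite{FK18}) of the pushforward~$j_*\cS$ of the spinor line bundle on~$\OGr_+(n,\DV)$, with terms~$\bigoplus_{\alpha\in\SYD_n}\bS_{|\alpha|}\otimes\Sigma^\alpha\cU$; the symmetric diagrams and the shift by~$\ell_{\diag}$ come out of a Borel--Weil--Bott computation on~$\OGr_+(n,\DV)$, not on~$\P(V)$. One then pulls this back along the graph map~$\gamma\colon\Sym^2V^\vee\setminus\{0\}\to\Gr(n,\DV)$, $q\mapsto\Gamma_q$. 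Because~$\gamma$ avoids~$\OGr_+(n,\DV)$ (the graph of a nonzero quadratic form is never isotropic), the pulled-back complex is \emph{exact on the nose}; no separate exactness argument is needed. Descending along~$\pi\colon\Sym^2V^\vee\setminus\{0\}\to\P(\Sym^2V^\vee)$ yields an exact complex of~$\cB_0$-modules (the Clifford--Koszul complex), and applying~$\bigoplus_i\rH^\bullet(-(\tfrac{i}{2}))$ gives~\eqref{eq:simple-resolution} directly. The crucial missing idea in your proposal is this passage through~$\OGr(n,V\oplus V^\vee)$ and its spinor bundle.
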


The smallest symmetric Young diagrams are 
\begin{itemize}
\ytableausetup{boxsize = .4em, centertableaux}
\item
the empty diagram~$\varnothing$; note that~$|\varnothing| = \ell_\diag(\varnothing) = 0$;
\item
the one-box diagram~$\ydiagram{1} = (1)$; note that~$|\ydiagram{1}| = \ell_\diag(\ydiagram{1}) = 1$;
\item
the small hook diagram~$\ydiagram{2,1} = (2,1)$; note that~$\left|\ydiagram{2,1}\right| = 3$, $\ell_\diag(\ydiagram{2,1}) = 1$.
\end{itemize}
Therefore, the rightmost part of the exact sequence~\eqref{eq:simple-resolution} looks like
\begin{equation}
\label{eq:simple-resolution-explicit}
\dots \to
\Sigma^{2,1}V \otimes \bB(-3) \xrightarrow{\ \rd_2\ } V \otimes \bB(-1) \xrightarrow{\ \rd_1\ } \bB \to \kk \to 0.
\end{equation}
It is easy to see that:
\begin{itemize}
\item
Exactness of~\eqref{eq:simple-resolution-explicit} in term~$\bB$ means that the algebra~$\bB$
is generated by its first component, i.e., the morphism (induced by the multiplication in~$\bB$)
\begin{equation*}
\bT(V) = \bT(\bB_1) \to \bB
\end{equation*}
from the tensor algebra of~$V = \bB_1$ is surjective, see Corollary~\ref{cor:intro-generation}.
\item
Exactness of~\eqref{eq:simple-resolution-explicit} in term~$V \otimes \bB(-1)$ means
that the kernel of the epimorphism~$\bT(V) \to \bB$ is the two-sided ideal in~$\bT(V)$
generated by the subspace~$\Sigma^{2,1}V \subset V^{\otimes 3}$,
see Corollary~\ref{cor:intro-relations}.
\end{itemize}
More generally, it follows from Theorem~\ref{thm:intro-syzygies}
that the dual $\rA_\infty$-algebra~$\bB^! \coloneqq \Ext^\bullet_\bB(\kk,\kk)$ of~$\bB$
can be written as
\begin{equation*}
\bB^! \cong
\bigoplus_{\alpha \in \SYD_n} \Sigma^\alpha V^\vee \left(|\alpha|\left)\left[-\tfrac{|\alpha| + \ell_{\diag}(\alpha)}2\right]\right.\right.,
\end{equation*}
where~$(-)$ and~$[-]$ are the shifts of the internal and homological grading, respectively,
see Corollary~\ref{cor:intro-bb-shriek}.

\subsection{Minimal Clifford spaces}

Since intersection of less than~$n$ quadrics in~$\P(V) = \P^{n-1}$ is always non-empty,
minimal Clifford spaces have the form~$(\P(U), \cB_0(V))$,
where~$U \subset \Sym^2V^\vee$ is a subspace of dimension~$\dim(U) = n$
such that the intersection of quadrics~$Q_u$ for~$u \in U$ is empty.

The full exceptional collection of a minimal Clifford space has length~$2n - n = n$, so that
\begin{equation*}
\Db(\P(U), \cB_0(V)) = \Big\langle \cB_{1-n}(V), \dots, \cB_{-1}(V), \cB_0(V) \Big\rangle.
\end{equation*}
We check that~$(\P(U), \cB_0(V))$ is an example of a \emph{noncommutative projective space},
namely its exceptional collection extends to a periodic helix of the form~$\Big\{ \cB_i(V), i \in \ZZ \Big\}$,
this helix is geometric in the sense of Bondal--Polishchuk~\cite{BP93},
its graded algebra~$\bB_U$ (defined in~\eqref{eq:algebra-clms}) is Koszul,
and its dual algebra~$\bB_U^!$ is Frobenius, see Theorem~\ref{thm:bbu}.
Moreover, $\bB_U$ is a flat deformation of the polynomial algebra,
while~$\bB_U^!$ is a flat deformation of the exterior algebra, see Corollary~\ref{cor:bbu-generation}.

\subsection*{Conventions}

We work over a field~$\kk$ of characteristic zero.

\subsection*{Acknowledgement}

I would like to thank Pieter Belmans, Anton Fonarev and Dmitri Orlov for useful discussions.
This work is carried out with a support of Russian Science Foundation, grant~25-11-00057, \url{https://rscf.ru/project/25-11-00057/}.

\section{Clifford spaces}
\label{sec:cs}

Let~$V$ be a vector space of dimension~$n$
and let $\P(\Sym^2(V^\vee))$ be the projective space of quadratic forms on~$V$.
Let~$U \subset \Sym^2V^\vee$ be a linear subspace of dimension~$k$.
For~$u \in U$ we denote by~$Q_u \subset \P(V)$ the corresponding quadric.
We assume that the intersection of the quadrics in~$U$ is empty:
\begin{equation}
\label{eq:empty}
\bigcap_{u \in U} Q_u = \varnothing.
\end{equation}
In particular, $k \ge n$.

\subsection{Clifford sheaves}

Following~\cite{K08} we denote by~$\cB_0(V)$ and~$\cB_1(U)$ the sheaves of even and odd parts of Clifford algebras on~$\P(U)$
corresponding to the family of quadrics~$\{ Q_u \}_{u \in \P(U)}$ defined as
\begin{align}
\label{eq:cliff-0}
\cB_0(V) &\coloneqq \cO \oplus \big(\bw2V \otimes \cO(-1)\big) \oplus \big(\bw4V \otimes \cO(-2)\big) \oplus \dots,
\\
\label{eq:cliff-1}
\cB_1(V) &\coloneqq \big(V \otimes \cO\big) \oplus \big(\bw3V \otimes \cO(-1)\big) \oplus \big(\bw5V \otimes \cO(-2)\big) \oplus \dots,
\end{align} 
respectively. 
We consider the Clifford multiplication
\begin{equation*}
\bw{l}V \otimes \bw{m} V \otimes \cO \to \bigoplus_{s = 0}^{\min(l,m)} \bw{l+m-2s}V \otimes \cO(s),
\end{equation*}
defined by
\begin{multline}
\label{eq:clifford-multiplication}
(v_1 \wedge \dots \wedge v_l) \otimes (v'_1 \wedge \dots \wedge v'_m)
\\
\mapsto
\sum_{s = 0}^{\min(l,m)}
\sum_{\sigma \in \fS_l,\ \tau \in \fS_m}
\pm \left(\bigwedge_{i = s+1}^l v_{\sigma(i)}\right) \wedge
\left(\bigwedge_{i = s+1}^m v'_{\tau(i)} \right) \otimes
\left(\bigotimes_{i=1}^s \bq_U(v_{\sigma(i)}, v'_{\tau(i)})\right)
\end{multline}
where~$\bq_U$ in the right-hand side is the composition~$\Sym^2(V) \otimes \cO \to U^\vee \otimes \cO \to \cO(1)$
of the dual map to the embedding~$U \hookrightarrow \Sym^2(V^\vee)$ and the tautological epimorphism of~$\P(U)$,
and the signs are chosen using the Koszul rule of signs (see~\cite{K14} for details).
The Clifford multiplication endows~$\cB_0(V)$ with an $\cO$-algebra structure.
We consider the category~$\coh(\P(U), \cB_0(V))$ of sheaves of~$\cB_0(V)$-modules on~$\P(U)$ as a noncommutative variety
and call it {\sf a Clifford space} associated to the space of quadrics~$U$.

The Clifford multiplication endows~$\cB_1(V)$ with a structure of~$\cB_0(V)$-bimodule.
Furthermore, using twists, we obtain and infinite sequence of sheaves of $\cB_0(V)$-bimodules
\begin{equation}
\label{eq:cliff-i}
\cB_{i+2}(V) = \cB_i(V) \otimes \cO(1).
\end{equation}
Combining this with~\eqref{eq:cliff-0} and~\eqref{eq:cliff-1}, we can uniformly rewrite their definition as
\begin{equation}
\label{eq:cliff-i-again}
\cB_i(V) =
\bigoplus_{s \in \ZZ} \bw{i-2s}V \otimes \cO(s),
\end{equation}
where by convention~$\bw{j}V = 0$ unless~$0 \le j \le n$.
This easily implies the following

\begin{lemma}
\label{lem:cohomology-clifford}
We have
\begin{equation*}
\rH^p(\P(U), \cB_{i}(V)) \cong
\begin{cases}
\bigoplus_{s \ge 0} \bw{i-2s}V \otimes \Sym^sU^\vee,
& \text{if~$p = 0$ and~$i \ge 0$},
\\
\bigoplus_{s \ge 0} \bw{i + 2k + 2s}V \otimes \Sym^sU,
& \text{if~$p = k$ and~$i \le n-2k$},
\\
0, & \text{otherwise}.
\end{cases}
\end{equation*}
\end{lemma}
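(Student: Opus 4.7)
The plan is to combine the explicit decomposition~\eqref{eq:cliff-i-again} with the Bott--Serre cohomology of line bundles on the projective space~$\P(U)$. Since sheaf cohomology commutes with finite direct sums and with tensor products by constant vector spaces, applying~$\rH^p(\P(U), -)$ term-by-term to~\eqref{eq:cliff-i-again} yields
$$\rH^p(\P(U), \cB_i(V)) \,=\, \bigoplus_{s \in \ZZ} \bw{i-2s}V \otimes \rH^p(\P(U), \cO(s)),$$
so the entire statement reduces to the classical cohomology of line bundles on~$\P(U)$.

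Next, I would invoke the standard formula: $\rH^0(\P(U), \cO(s)) = \Sym^s U^\vee$ for~$s \ge 0$; via Serre duality using the appropriate normalization of~$\omega_{\P(U)}$, the top cohomology~$\rH^{\dim \P(U)}(\P(U), \cO(s))$ is isomorphic to~$\Sym^{-s-k}U$ for~$s \le -k$; and all other cohomology vanishes term-by-term. Substituting this into the display immediately produces the two non-trivial cases of the lemma.

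For~$p = 0$, the sum~$\bigoplus_{s \ge 0} \bw{i-2s}V \otimes \Sym^s U^\vee$ contains a non-zero term only when~$0 \le i - 2s$ for some~$s \ge 0$; given the convention~$\bw{j}V = 0$ for~$j < 0$, this forces~$i \ge 0$, recovering the first case. For the top cohomology, re-indexing~$s = -k - t$ with~$t \ge 0$ rewrites the answer as~$\bigoplus_{t \ge 0} \bw{i + 2k + 2t}V \otimes \Sym^t U$; the convention~$\bw{j}V = 0$ for~$j > n$ forces the smallest wedge index~$i + 2k$ to satisfy~$i + 2k \le n$, yielding the condition~$i \le n - 2k$. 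All intermediate~$\rH^p$ vanish because each~$\rH^p(\P(U), \cO(s))$ already vanishes in these degrees.

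There is no substantive obstacle: the argument is a bookkeeping reduction to the Serre/Bott computation on~$\P(U)$. The only point requiring a moment of care is matching the Serre-duality normalization so as to identify the top cohomology with~$\Sym^{-s-k}U$ (rather than its dual with a possible~$\det U$-twist), which is determined by the convention for~$\omega_{\P(U)}$ adopted throughout the paper.
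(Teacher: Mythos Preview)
Your proposal is correct and follows exactly the approach the paper intends: the lemma is stated immediately after~\eqref{eq:cliff-i-again} with the remark ``This easily implies the following cohomology formula'', and no further proof is given. Your reduction via term-by-term application of the standard cohomology of line bundles on~$\P(U) \cong \P^{k-1}$, together with the bookkeeping on the wedge indices, is precisely what is meant; your caution about the Serre-duality normalization is apt (and in fact the cohomological degree for the top case should read~$p = k-1 = \dim\P(U)$, as your argument correctly reflects).
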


The bimodules~$\cB_i(V)$ behave as invertible sheaves.

\begin{lemma}
\label{lem:clifford-twist}
The sheaves~$\cB_i(V)$ are locally projective over~$\cB_0(V)$ as left or right modules and
\begin{equation}
\label{eq:ext-clifford}
\Ext_{\cB_0(V)}^\bullet(\cB_{i}(V), \cB_{j}(V)) \cong
\rH^\bullet(\P(U), \cB_{j-i}(V)).
\end{equation}
Moreover, the functors
\begin{equation}
\label{eq:twist}
\cF \mapsto \cF(\tfrac{i}2) \coloneqq \cF \otimes_{\cB_0(V)} \cB_i(V) \cong \cHom_{\cB_0(V)}(\cB_{-i}(V), \cF)
\end{equation}
are exact autoequivalences of~$\coh(\P(U),\cB_0(V))$ such that~$\cF(\tfrac{i}2)(\tfrac{j}2) \cong \cF(\tfrac{i+j}2)$.
\end{lemma}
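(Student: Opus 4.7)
The plan is to reduce all three claims to a local triviality statement for the sheaves~$\cB_i(V)$, which is a direct consequence of the emptiness hypothesis~\eqref{eq:empty}. Indeed,~\eqref{eq:empty} is equivalent to saying that for every $v \in V \setminus 0$ the section $q(v,v) \in \Gamma(\P(U),\cO(1))$ is nonzero (otherwise $[v] \in \P(V)$ would lie in~$\bigcap_u Q_u$), so the open sets
\[
U_v := \{ u \in \P(U) \mid q(v,v)(u) \ne 0 \}, \qquad v \in V \setminus 0,
\]
form an open cover of~$\P(U)$.

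On each $U_v$ the section $q(v,v)$ trivializes $\cO(1)|_{U_v}$, and the Clifford relation $v \cdot v = q(v,v)$ shows that multiplication by $v$ and by $q(v,v)^{-1} v$ are mutually inverse. Iterating, for every $i \in \ZZ$ the appropriately interpreted power $v^i$ is a nowhere vanishing section of $\cB_i(V)$ on~$U_v$ (using $q(v,v)^{-1}$ for negative~$i$), and left (resp.\ right) multiplication by $v^i$ yields an isomorphism $\cB_0(V)|_{U_v} \xrightarrow{\sim} \cB_i(V)|_{U_v}$ of left (resp.\ right) $\cB_0(V)|_{U_v}$-modules. In particular each $\cB_i(V)$ is locally free of rank one, hence locally projective, over~$\cB_0(V)$, proving the first assertion.

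The remaining statements follow by further local checks on the cover~$\{U_v\}$ combined with standard homological formalism. The Clifford multiplication defines canonical maps $\cB_i(V) \otimes_{\cB_0(V)} \cB_j(V) \to \cB_{i+j}(V)$ which on $U_v$ send the generator $v^i \otimes v^j$ to $v^{i+j}$, hence are isomorphisms; specializing to $j = -i$ gives a perfect pairing $\cB_{-i}(V) \otimes_{\cB_0(V)} \cB_i(V) \cong \cB_0(V)$ and thus the identification $\cB_i(V) \cong \cHom_{\cB_0(V)}(\cB_{-i}(V), \cB_0(V))$, which combined with local projectivity of $\cB_{-i}(V)$ extends to the functorial isomorphism $\cHom_{\cB_0(V)}(\cB_{-i}(V), \cF) \cong \cF \otimes_{\cB_0(V)} \cB_i(V)$ appearing in~\eqref{eq:twist}. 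Taking $\cF = \cB_j(V)$ gives $\cHom_{\cB_0(V)}(\cB_i(V), \cB_j(V)) \cong \cB_{j-i}(V)$, and~\eqref{eq:ext-clifford} follows from the local-to-global spectral sequence, since the higher $\cExt$ sheaves vanish by local projectivity. Exactness of $\cF \mapsto \cF(\tfrac{i}{2})$ is flatness; the quasi-inverse $\cF \mapsto \cF(-\tfrac{i}{2})$ works because $\cB_i(V) \otimes_{\cB_0(V)} \cB_{-i}(V) \cong \cB_0(V)$; and the composition law $\cF(\tfrac{i}{2})(\tfrac{j}{2}) \cong \cF(\tfrac{i+j}{2})$ is associativity together with $\cB_i \otimes_{\cB_0} \cB_j \cong \cB_{i+j}$. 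No step is a serious obstacle: the only essential input is the use of~\eqref{eq:empty} to produce the covering~$\{U_v\}$, and the rest is Clifford-algebra bookkeeping.
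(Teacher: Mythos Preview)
Your proof is correct and self-contained, whereas the paper simply cites \cite[Lemma~3.8 and Corollary~3.9]{K08} for the local projectivity and the bimodule isomorphisms $\cB_i(V) \otimes_{\cB_0(V)} \cB_j(V) \cong \cB_{i+j}(V)$ and $\cRHom_{\cB_0(V)}(\cB_i(V),\cB_j(V)) \cong \cB_{j-i}(V)$, and then deduces~\eqref{eq:ext-clifford} and the autoequivalence statements from these. Your explicit argument via the cover $\{U_v\}$ and the Clifford relation $v \cdot v = q(v,v)$ is the standard way to see that the~$\cB_i(V)$ are invertible $\cB_0(V)$-bimodules, and is in the same spirit as the cited reference; the paper merely outsources this step.

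One correction to your framing, however: the emptiness hypothesis~\eqref{eq:empty} is \emph{not} actually used, and your claim that it is ``the only essential input'' is misleading. For the sets $\{U_v\}$ to cover $\P(U)$ you need that for each nonzero $u \in U$ there exists $v \in V$ with $u(v,v) \ne 0$; this is automatic, since a nonzero quadratic form does not vanish identically. The emptiness hypothesis is the \emph{dual} statement --- that for each nonzero $v$ there exists $u \in U$ with $u(v,v) \ne 0$ --- which guarantees that each individual $U_v$ is nonempty but is irrelevant for the covering property. In fact the lemma holds for an arbitrary linear system $U \subset \Sym^2 V^\vee$, which is exactly the generality in which it is stated and proved in~\cite{K08}. (A small side remark: right multiplication by $v^i$ gives a map of \emph{left} $\cB_0(V)$-modules and vice versa, so your ``left (resp.\ right)'' labels are swapped, but this does not affect the argument.)
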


\begin{proof}
For local projectivity of~$\cB_i(V)$ and for the following useful isomorphisms
\begin{equation}
\label{eq:tensor-bi-bj}
\cB_i(V) \otimes_{\cB_0(V)} \cB_j(V) \cong \cB_{i+j}(V),
\qquad
\cRHom_{\cB_0(V)}(\cB_i(V),\cB_j(V)) \cong \cB_{j-i}(V)
\end{equation}
for all~$i,j \in \ZZ$ (where the tensor product is derived) we refer to~\cite[Lemma~3.8 and Corollary~3.9]{K08}.
The isomorphism~\eqref{eq:ext-clifford} and the last claim follow easily.
\end{proof}

We refer to the autoequivalence~\eqref{eq:twist} as {\sf Clifford twist}.
Since~$\cB_{2i}(V) \cong \cB_0(V) \otimes \cO(i)$ by~\eqref{eq:cliff-i},
the integral Clifford twist~$\cF(i)$ is isomorphic to~$\cF \otimes \cO(i)$,
while~$\cF(\tfrac{i}2)$ is its square root; this justifies the notation.

\subsection{The helix}

Let~$(\cE_1,\dots,\cE_m)$ be a full exceptional collection in a triangulated category~$\cT$.
Recall that a {\sf helix} in~$\cT$ extending~$(\cE_1,\dots,\cE_m)$
is an infinite sequence~$\{\cE_i\}_{i \in \ZZ}$ of exceptional objects such that~$(\cE_{i},\dots,\cE_{i+m-1})$
is a full exceptional collection in~$\cT$ for all~$i$
(this definition differs slightly from the one used in~\cite{BP93}, see Remark~\ref{rem:helix}).
We say that a helix is
{\sf $1$-periodic} if there is an autoequivalence~$\Phi \colon \cT \to \cT$ such that~$\cE_{i+1} = \Phi(\cE_i)$ for all~$i$,
and {\sf strong} if~$\Ext^p(\cE_i,\cE_j) = 0$ for all~$i \le j$ and~$p \ne 0$.

The following result proved in~\cite{K08} and already mentioned in the introduction, 
can be considered as a ``Clifford version'' of the Beilinson's exceptional collection on a projective space.

\begin{theorem}
\label{thm:clifford-helix}
Let~$U \subset \Sym^2(V^\vee)$ be a $k$-dimensional space of quadrics satisfying the assumption~\eqref{eq:empty}.
The derived category~$\Db(\P(U), \cB_0(V))$ has a full and strong exceptional collection
\begin{equation}
\label{eq:db-clifford}
\Db(\P(U), \cB_0(V)) = \Big\langle \cB_{1+ n - 2k}(V), \dots, \cB_{-1}(V), \cB_0(V) \Big\rangle.
\end{equation}
The sequence~$\{\cB_i(V))\}_{i \in \ZZ}$ is a $1$-periodic strong helix extending the exceptional collection~\eqref{eq:db-clifford}.
\end{theorem}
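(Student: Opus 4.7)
The plan is to reduce every claim to explicit cohomology computations on $\P(U)$ by combining Lemmas~\ref{lem:cohomology-clifford} and~\ref{lem:clifford-twist}. Applying~\eqref{eq:ext-clifford}, we have
\begin{equation*}
\Ext^p_{\cB_0(V)}(\cB_i(V), \cB_j(V)) \cong \rH^p(\P(U), \cB_{j-i}(V)),
\end{equation*}
so semiorthogonality, exceptionality, and strongness of~\eqref{eq:db-clifford} become vanishing and computation statements for $\rH^\bullet(\P(U), \cB_m(V))$ in a suitable range of $m$. Since we work in the regime $k \ge n$, the inequality $n - 2k \le -n < 0$ holds, so the ``top cohomology'' contribution in Lemma~\ref{lem:cohomology-clifford} is supported in $m \le n - 2k < 0$, while the ``bottom cohomology'' contribution is supported in $m \ge 0$. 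It follows that $\rH^\bullet(\P(U), \cB_m(V))$ vanishes for $1+n-2k \le m \le -1$ (semiorthogonality), equals $\kk$ concentrated in degree zero for $m = 0$ (exceptionality, using that only the $s=0$ term $\bw{0}V \otimes \Sym^0 U^\vee$ survives), and is concentrated in degree zero throughout $0 \le m \le 2k-n-1$ (strongness).

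For the periodic helix I would use the Clifford twist: by Lemma~\ref{lem:clifford-twist}, the functor $\Phi(\cF) \coloneqq \cF(\tfrac12) = \cF \otimes_{\cB_0(V)} \cB_1(V)$ is an exact autoequivalence of $\Db(\P(U), \cB_0(V))$, and~\eqref{eq:tensor-bi-bj} yields $\Phi(\cB_i(V)) \cong \cB_{i+1}(V)$. Hence, once the window~\eqref{eq:db-clifford} is shown to be a full strong exceptional collection, iterating $\Phi$ translates it into every consecutive window $\langle \cB_i(V), \dots, \cB_{i+2k-n-1}(V) \rangle$, so $\{\cB_i(V)\}_{i\in\ZZ}$ is automatically a $1$-periodic strong helix.

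The main obstacle is \emph{fullness} of~\eqref{eq:db-clifford}, which is not implied by any cohomology vanishing. My approach would be to reproduce the argument of~\cite[Theorem~5.5]{K08}: since $\cB_0(V)$ is a locally free $\cO_{\P(U)}$-algebra of finite rank, the category $\Db(\P(U), \cB_0(V))$ is generated by objects of the form $\cB_0(V) \otimes_\cO \cO(j)$ for $j$ in a Beilinson window on $\P(U)$, and by~\eqref{eq:cliff-i} these coincide with the even Clifford sheaves $\cB_{2j}(V)$. Starting from the Beilinson resolution of the diagonal on $\P(U)$ and twisting by $\cB_0(V)$, one expresses each $\cB_{2j}(V)$ as an iterated cone in the objects $\cB_i(V)$ with $1+n-2k \le i \le 0$, with differentials built from the Clifford multiplication~\eqref{eq:clifford-multiplication}. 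This Koszul-type resolution step is the technical heart of the argument, while the remaining assertions follow formally from it together with the cohomology computations described above.
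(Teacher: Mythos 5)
Your treatment of exceptionality, semiorthogonality, strongness, and the $1$-periodic helix coincides with the paper's: both reduce $\Ext$-groups to $\rH^\bullet(\P(U),\cB_m(V))$ via~\eqref{eq:ext-clifford} and Lemma~\ref{lem:cohomology-clifford}, and both obtain the helix by translating the window~\eqref{eq:db-clifford} with the Clifford twist~$-\otimes_{\cB_0(V)}\cB_1(V)$ using~\eqref{eq:tensor-bi-bj}. The divergence, and the problem, is fullness. The paper does not prove fullness from scratch: it invokes \cite[Theorem~5.5]{K08}, whose proof goes through homological projective duality for the double Veronese embedding and is a substantial external input. You instead announce a direct argument, but what you actually supply does not close the loop. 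The Beilinson step only shows that $\Db(\P(U),\cB_0(V))$ is generated by $\cB_0(V)\otimes\cO(-j)=\cB_{-2j}(V)$ for $0\le j\le k-1$; since $2-2k<1+n-2k$ whenever $n\ge 2$, several of these generators lie \emph{outside} the window $[1+n-2k,0]$, so fullness is exactly the assertion that those extra $\cB_{-2j}(V)$ are iterated cones of objects in the window. You defer this ("the technical heart") without constructing the required resolutions, and this is precisely the nontrivial content of the theorem --- comparable in difficulty to the Clifford--Koszul complex of Theorem~\ref{thm:clifford-koszul}, which the paper only establishes for the maximal space and via~\cite{FK18}.

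A concrete symptom of the gap: the hypothesis~\eqref{eq:empty} never enters your sketch. The Beilinson resolution of the diagonal on $\P(U)$ and the identification $\cB_0(V)\otimes\cO(j)\cong\cB_{2j}(V)$ hold for an arbitrary linear system of quadrics, yet for a non-empty base locus $X_U$ the collection $\langle\cB_{1+n-2k}(V),\dots,\cB_0(V)\rangle$ is \emph{not} full (its orthogonal contains $\Db(X_U)$ by~\eqref{eq:dbpw-cl}). So whatever "Koszul-type resolution built from the Clifford multiplication" you have in mind must use~\eqref{eq:empty} in an essential way, and you give no indication of how; moreover, describing this as reproducing "the argument of \cite[Theorem~5.5]{K08}" misrepresents that proof, which is not a Beilinson-plus-cones argument. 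Either cite \cite[Theorem~5.5]{K08} for fullness, as the paper does, or supply the missing exact sequences (e.g.\ a restriction to $\P(U)$ of a Clifford--Koszul-type complex) together with the point where emptiness of $\bigcap_{u\in U}Q_u$ is used.
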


\begin{proof}
The fact that~\eqref{eq:db-clifford} is a full exceptional collection follows from~\cite[Theorem~5.5]{K08}
applied to the family of quadrics~\mbox{$U \subset \Sym^2(V^\vee)$}
(note that the space~$V$ is denoted by~$W$ in~\cite{K08} and~$U$ is denoted by~$L$), and twisted by~$\cB_1(V)$.
To check that the collection is strong we apply~\eqref{eq:ext-clifford} and Lemma~\ref{lem:cohomology-clifford}.

Next, applying the Clifford twist by~$i/2$ to~\eqref{eq:db-clifford} and using~\eqref{eq:tensor-bi-bj}, we see that
\begin{equation*}
\Db(\P(U)), \cB_0(V)) = \langle \cB_{i}(V), \cB_{i+1}(V), \dots, \cB_{i + 2k - n - 1}(V) \rangle
\end{equation*}
is a full exceptional collection;
this shows that~$\{\cB_i(V))\}_{i \in \ZZ}$ is a helix.
This helix is 1-periodic, because the Clifford twist by~$1/2$ shifts it by~$1$,
and strong by~\eqref{eq:ext-clifford} and Lemma~\ref{lem:cohomology-clifford}.
\end{proof}

A helix~$\{\cE_i\}$ gives rise to a $\ZZ$-algebra $\bigoplus_{i \le j} \Hom(\cE_i,\cE_j)$ (see~\cite{BP93}),
and if a helix is 1-periodic, it gives rise to a graded algebra~$\bigoplus_{i \ge 0} \Hom(\cE_0, \cE_i)$.
In the case of a Clifford space, the corresponding algebra can be thought of as the coordinate algebra of the Clifford space;
this is justified by the following result.

\begin{theorem}
\label{thm:clifford-algebra}
Let~$U \subset \Sym^2(V^\vee)$ be a $k$-dimensional space of quadrics satisfying the assumption~\eqref{eq:empty}.
The graded algebra of the~$1$-periodic helix~$\{\cB_i(V))\}_{i \in \ZZ}$ in~$\Db(\P(U), \cB_0(V))$
\begin{equation*}
\bB_U := \bigoplus_{i=0}^\infty \Hom_{\cB_0(V)}(\cB_0(V), \cB_i(V))
\end{equation*}
is right noetherian and there is an equivalence of categories
\begin{equation*}
\coh(\P(U), \cB_0(V)) \simeq \qgr(\bB_U),
\end{equation*}
where~$\qgr(\bB_U)$ is the quotient of the category of finitely generated graded~$\bB_U$-modules
by the subcategory of finite-dimensional graded~$\bB_U$-modules.
\end{theorem}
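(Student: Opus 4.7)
The plan is to invoke the noncommutative Serre theorem of Artin--Zhang, taking the Grothendieck category $\cA := \Qcoh(\P(U),\cB_0(V))$, the distinguished object $\cO := \cB_0(V)$, and the autoequivalence $s$ equal to the Clifford half-twist $\cF \mapsto \cF(\tfrac{1}{2})$ supplied by Lemma~\ref{lem:clifford-twist}. Under these choices one has $s^n\cO \cong \cB_n(V)$, so the graded endomorphism algebra of the pair $(\cO,s)$ coincides with $\bB_U$, and once the hypotheses of Artin--Zhang are checked, their theorem delivers simultaneously the right noetherianness of $\bB_U$ and the equivalence $\coh(\P(U),\cB_0(V)) \simeq \qgr(\bB_U)$.

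The bookkeeping hypotheses are immediate: $\cB_0(V)$ is a coherent sheaf of $\cO_{\P(U)}$-algebras on the noetherian scheme $\P(U)$, hence a noetherian object in $\cA$ whose subcategory of noetherian objects coincides with $\coh(\P(U),\cB_0(V))$. Moreover, by Lemma~\ref{lem:cohomology-clifford} the graded components $\Hom(\cO,s^n\cO) = \rH^0(\P(U),\cB_n(V))$ are finite-dimensional for every $n$ and vanish for $n<0$.

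The substantial step is to verify that the sequence $\{\cB_n(V)\}_{n\in\ZZ}$ is ample in the sense of Artin--Zhang, that is:
\begin{renumerate}
\item every coherent $\cF$ admits an epimorphism $\bigoplus_j \cB_{-n_j}(V) \twoheadrightarrow \cF$ with $n_j \ge 0$;
\item for any short exact sequence $0 \to \cK \to \cF \to \cG \to 0$ in $\coh(\P(U),\cB_0(V))$, the map $\Hom(\cB_{-n}(V),\cF) \to \Hom(\cB_{-n}(V),\cG)$ is surjective for $n \gg 0$.
\end{renumerate}
For (i), I would view $\cF$ as a coherent $\cO_{\P(U)}$-module and apply ordinary Serre generation on $\P(U)$ to obtain a surjection $\cO(-m)^{\oplus N} \twoheadrightarrow \cF$ for $m \gg 0$; tensoring with $\cB_0(V)$ over $\cO$ (right exact) and composing with the left action $\cB_0(V) \otimes_\cO \cF \to \cF$ (surjective via $1 \otimes f \mapsto f$) produces a $\cB_0(V)$-linear surjection $\cB_{-2m}(V)^{\oplus N} \twoheadrightarrow \cF$, using the identification $\cB_{-2m}(V) \cong \cB_0(V) \otimes \cO(-m)$ from~\eqref{eq:cliff-i}. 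For (ii), the obstruction to surjectivity is $\Ext^1_{\cB_0(V)}(\cB_{-n}(V),\cK) \cong \rH^1(\P(U),\cK(\tfrac{n}{2}))$ by~\eqref{eq:ext-clifford}; using the parity decomposition $\cB_{2m}(V) \cong \cB_0(V) \otimes \cO(m)$ and $\cB_{2m+1}(V) \cong \cB_1(V) \otimes \cO(m)$ from~\eqref{eq:cliff-i}, the half-twist $\cK(\tfrac{n}{2})$ is, as an $\cO_{\P(U)}$-module, an $\cO(m)$-twist of a fixed coherent sheaf (either $\cK$ itself or $\cK \otimes_{\cB_0(V)} \cB_1(V)$), so classical Serre vanishing on $\P(U)$ kills $\rH^1$ for $m \gg 0$.

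The main obstacle is really only to properly align our setup with the Artin--Zhang framework: the nontrivial content is condition~(ii), but once the half-twist is unfolded via~\eqref{eq:cliff-i} into an honest $\cO(m)$-twist of a coherent $\cO_{\P(U)}$-module, it dissolves into standard Serre vanishing. With ampleness established, the Artin--Zhang theorem yields right noetherianness of $\bB_U$, an equivalence $\Qcoh(\P(U),\cB_0(V)) \simeq \mathrm{QGr}(\bB_U)$ of Grothendieck categories, and by restriction to noetherian objects the desired equivalence $\coh(\P(U),\cB_0(V)) \simeq \qgr(\bB_U)$.
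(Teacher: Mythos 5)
Your proposal is correct and follows essentially the same route as the paper: both apply the Artin--Zhang theorem to the triple consisting of $\coh(\P(U),\cB_0(V))$ (resp.\ its quasi-coherent ambient category), the object $\cB_0(V)$, and the Clifford half-twist of Lemma~\ref{lem:clifford-twist}. The only difference is that you spell out the ampleness verification via the parity decomposition~\eqref{eq:cliff-i} and classical Serre generation/vanishing, which the paper compresses into the remark that the square of the half-twist is the twist by an ample line bundle.
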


\begin{proof}
This follows easily from the theorem of Artin and Zhang~\cite[Theorem~4.5]{AZ}
applied to the abelian category~$\coh(\P(U), \cB_0(V))$,
the object~$\cB_0(V)$ in it, considered as the structure sheaf,
and the autoequivalence~$\cF \mapsto \cF(\tfrac12)$ defined by~\eqref{eq:twist}.
Indeed, it is not hard to see that the hypothesis of the theorem are satisfied:
the hypothesis~(H1) holds because~$\cB_0(V)$ is a coherent algebra on the projective scheme~$\P(U)$,
(H2) is obvious, because~$\bB_{U,0} = \rH^0(\P(U),\cB_0(V)) = \kk$,
and~(H3) is easy, because the square of the autoequivalence~$\cF \mapsto \cF(\tfrac12)$
is the twist by an ample line bundle on~$\P(U)$.
Thus, \cite[Theorem~4.5]{AZ} proves that~$\bB_U$ is right noetherian
and~$\coh(\P(U), \cB_0(V)) \simeq \qgr(\bB_U)$.
\end{proof}

\begin{remark}
The algebra~$\bB_U$ contains a large central subalgebra
\begin{equation*}
\Sym^\bullet(U^\vee) =
\bigoplus_{j = 0}^\infty \, \Sym^jU^\vee =
\bigoplus_{j = 0}^\infty \, \rH^0(\P(U), \cO_{\P(U)}(j)) \subset
\bigoplus_{j = 0}^\infty \, \rH^0(\P(U), \cB_{2j}(V)) =
\bB_U^{(2)} \subset \bB_U,
\end{equation*}
where~$\bB_U^{(2)}$ denotes the second Veronese subalgebra in~$\bB_U$
and the embedding in the middle is induced by the inclusion~$\cO \subset \cB_0(V)$.
Moreover, the algebra~$\bB_U$ is finitely generated as~$\Sym^\bullet(U^\vee)$-module.
Actually, the center of~$\bB_U$ is slightly larger than~$\Sym^\bullet(U^\vee)$.
If~$n$ is even, it is generated over~$\Sym^\bullet(U^\vee)$ by the subspace~$\bw{n} V \subset \rH^0(\P(U), \cB_n(V))$;
it gives the double covering~$\widetilde\P(U) \to \P(U)$ branched along the discriminant locus~$\Delta(U) \subset \P(U)$
and the Clifford sheaves~$\cB_i(V)$ can be realized as pushforwards of appropriate coherent sheaves from~$\widetilde\P(U)$;
similarly, if~$n$ is odd, the sheaves~$\cB_i(V)$ can be realized as pushforwards from the root stack~$\sqrt{\P(U),\Delta(U)}$,
see~\cite[\S\S3.5--3.6]{K08}.
Note that~$\Delta(U)$ is the intersection of~$\P(U) \subset \P(\Sym^2V^\vee)$
with the discriminant divisor~$\Delta \subset \P(\Sym^2V^\vee)$,
and if~$U$ is general, $\Sing(\Delta(U)) = \P(U) \cap \Delta^{\ge 2}$,
where~$\Delta^{\ge 2} \subset \P(\Sym^2V^\vee)$ is the corank~$2$ degeneracy locus.
\end{remark}

\begin{example}
Let~$n = 3$, $k = 4$, and let~$U \subset \Sym^2V^\vee$ be general.
Then~$\Delta(U) \subset \P(U) = \P^3$ is the Cayley cubic surface with~$\Sing(\Delta(U))$ a 4-point set.
The category~$\Db(\P(U),\cB_0(V))$ can be thought of as a twisted categorical resolution of the root stack~$\sqrt{\P(U),\Delta(U)}$.
It has an exceptional collection of length~$2k - n = 5$, which is remarkably small.
\end{example}

\section{Maximal Clifford space}
\label{sec:cms}

Recall from the introduction that the {\sf maximal Clifford space} is the Clifford space
\begin{equation*}
(\P(\Sym^2(V^\vee)), \cB_0(V)),
\end{equation*}
corresponding to the complete linear system of all quadrics in~$\P(V)$.

Throughout this section we use abbreviation~$\cB_i$ for~$\cB_i(V)$.
Note that the algebra~$\cB_0$ is $\PGL(V)$-equivariant, hence the group~$\PGL(V)$ acts on the maximal Clifford space.
On the other hand, the sheaves~$\cB_i$ for~$i \ne 0$ are not $\PGL(V)$-equivariant
(but, say, tensor products $\Sym^iV \otimes \cB_{-i}$ are).

Theorems~\ref{thm:clifford-helix} and~\ref{thm:clifford-algebra} apply to the maximal Clifford space
and give the full strong exceptional collection
\begin{equation}
\label{eq:db-max-cs}
\Db(\P(\Sym^2(V^\vee)), \cB_0) = \Big\langle \cB_{1-n^2}, \dots, \cB_{-1}, \cB_0 \Big\rangle,
\end{equation}
the 1-periodic strong helix~$\{ \cB_i \}_{i \in \ZZ}$, and the graded algebra~$\bB_{\Sym^2(V^\vee)}$
(abbreviated to~$\bB$), such that
\begin{equation*}
\coh(\P(\Sym^2(V^\vee)), \cB_0) \simeq \qgr(\bB).
\end{equation*}
In this section we discuss some specific properties of the maximal Clifford spaces.
We start with an example of a maximal Clifford space of small dimension.

\begin{example}
\label{ex:cls-2}
Consider the Clifford space with~$\dim(V) = 2$.
Then~$\P(\Sym^2V) \cong \P^2$ and the sheaf of algebras~$\cB_0 \cong \cO \oplus \cO(-1)$ is commutative,
hence the category~$\Db(\P(\Sym^2(V^\vee)), \cB_0)$ is equivalent to the derived category of the relative spectrum
\begin{equation*}
\Spec_{\P(\Sym^2V^\vee)}(\cB_0) =
\Spec_{\P(\Sym^2V^\vee)}(\cO \oplus \cO(-1)) \cong
\P(V^\vee) \times \P(V^\vee),
\end{equation*}
where the natural morphism~$\P(V^\vee) \times \P(V^\vee) \to \P(\Sym^2V^\vee)$ is given by~$(v_1,v_2) \mapsto v_1 \cdot v_2$;
it is the double covering ramified over the Veronese conic~$\P(V^\vee) \subset \P(\Sym^2V^\vee)$.
Moreover, under this equivalence the Clifford twist~$\cF \mapsto \cF(\tfrac12)$ (see Lemma~\ref{lem:clifford-twist})
corresponds to the composition of the twist~\mbox{$\cF \mapsto \cF(1,0)$}
with the involution~$(v_1,v_2) \mapsto (v_2,v_1)$,
and therefore the Clifford sheaves of~$\cB_0$-modules~$\cB_{2i}$ and~$\cB_{2i+1}$ on~$\P(\Sym^2V^\vee)$
correspond to the line bundles~$\cO_{\P(V^\vee) \times \P(V^\vee)}(i,i)$ and~$\cO_{\P(V^\vee) \times \P(V^\vee)}(i+1,i)$, respectively.
Finally, the exceptional collection~\eqref{eq:db-max-cs} corresponds to
\begin{equation*}
\Db(\P(V^\vee) \times \P(V^\vee)) =
\langle
\cO_{\P(V^\vee) \times \P(V^\vee)}(-1,-2),
\cO_{\P(V^\vee) \times \P(V^\vee)}(-1,-1),
\cO_{\P(V^\vee) \times \P(V^\vee)}(0,-1),
\cO_{\P(V^\vee) \times \P(V^\vee)}
\rangle,
\end{equation*}
a standard exceptional collection on~$\P(V^\vee) \times \P(V^\vee)$
and the algebra~$\bB$ can be rewritten as
\begin{equation*}
\bB =
\bigoplus_{i=0}^\infty \bB_i =
\bigoplus_{i=0}^\infty \rH^0(\P(V) \times \P(V), \cO(\lceil\tfrac{i}2\rceil, \lfloor\tfrac{i}2\rfloor)) \cong
\bigoplus_{i=0}^\infty \Sym^{\lceil\tfrac{i}2\rceil}(V^\vee) \otimes \Sym^{\lfloor\tfrac{i}2\rfloor}(V^\vee),
\end{equation*}
where the multiplication~$\bB_i \otimes \bB_j \to \bB_{i+j}$ is obvious when either~$i$ or~$j$ is even,
and in the case where both~$i$ and~$j$ are odd it is induced by the identity map
\begin{equation*}
\bB_1 \otimes \bB_1 = (V^\vee \otimes \kk) \otimes (V^\vee \otimes \kk) \to V^\vee \otimes V^\vee = \bB_2.
\end{equation*}
Note, in particular, that the algebra~$\bB$ has no quadratic relations.
\end{example}

\begin{remark}
In fact, the algebra~$\bB$ discussed in Example~\ref{ex:cls-2}
is the twisted homogeneous coordinate ring (in the sense of~\cite[\S3]{SVDB01}) of~$\P(V^\vee) \times \P(V^\vee)$,
and its $\ZZ$-algebra version was discussed in~\cite[\S4.1]{VDB11}.
\end{remark}

\subsection{The algebra~$\bB$}

To prove Theorem~\ref{thm:intro-bb}, we need to recall some notation;
in particular, $\YD_n$ denotes the set of all Young diagrams with at most~$n$ rows
and for~$\alpha \in \YD_n$ we write~$|\alpha| = \sum \alpha_i$ for the number of boxes in~$\alpha$
and~$\Sigma^\alpha V$ for the Schur functor of~$\alpha$ applied to~$V$ (see~\cite[\S2.1]{W}).
Note that~$\Sigma^\alpha V$ is a polynomial representation of~$\GL(V)$.

\begin{proof}[Proof of Theorem~\textup{\ref{thm:intro-bb}}]
First of all, combining\eqref{eq:ext-clifford} and Lemma~\ref{lem:cohomology-clifford} we see that
\begin{equation*}
\bB_i \coloneqq \Ext^\bullet_{\cB_0}(\cB_0,\cB_i) \cong \bigoplus_{s = 0}^{\lfloor i/2 \rfloor} \bw{i-2s}V \otimes \Sym^s(\Sym^2V).
\end{equation*}
To check that this is the direct sum of all~$\Sigma^\alpha V$,
recall that (see, e.g., \cite[Proposition~2.3.8]{W})
\begin{equation*}
\Sym^s(\Sym^2V) = \bigoplus_{|\beta| = s} \Sigma^{2\beta} V,
\end{equation*}
where for a Young diagram $\beta = (\beta_1,\beta_2,\dots,\beta_n)$ we define~$2\beta := (2\beta_1,2\beta_2,\dots,2\beta_n)$.
Therefore, by Pieri's rule~\cite[Corollary~2.3.5]{W} we have
\begin{equation*}
\bw{i-2s}V \otimes \Sym^{s}(\Sym^2(V)) = \bigoplus_{|\beta| = s} \left( \bigoplus_{\alpha \in \rE(2\beta,i-2s)} \Sigma^{\alpha} V \right),
\end{equation*}
where $\rE(\gamma,p)$ is the set of all Young diagrams $\alpha$ such that
\begin{equation*}
\gamma_t \le \alpha_t \le \gamma_t + 1
\quad\text{for all $1 \le t \le n$}
\qquad\text{and}\qquad
|\alpha| = |\gamma| + p.
\end{equation*}
It remains to note that for each Young diagram $\alpha$ there exist a unique Young diagram~$\beta$ and integer~$p$ 
such that $\alpha \in \rE(2\beta,p)$;
indeed, if $\alpha = (\alpha_1,\alpha_2,\dots,\alpha_n)$ then
\begin{equation*}
\beta_t = \lfloor \alpha_t/2 \rfloor
\qquad\text{and}\qquad 
p = |\alpha| - 2|\beta|.
\end{equation*}
Thus, every Schur functor $\Sigma^\alpha V$ appears in the decomposition of the summand $\bw{p}V \otimes \Sym^{|\beta|}(\Sym^2(V))$
of~$\rH^0(\P(\Sym^2(V^\vee)), \cB_{|\alpha|})$, and exactly once.
\end{proof}

It is not hard to compute the multiplication map of the graded algebra~$\bB$ in small degrees.

\begin{example}
\label{ex:bb2}
First of all, using the Clifford multiplication formula~\eqref{eq:clifford-multiplication}
it is easy to see that the map
\begin{equation*}
V \otimes V = \bB_1 \otimes \bB_1 \to \bB_2 = \bw2V \oplus \Sym^2V
\end{equation*}
is given by~$v_1 \otimes v_2 \mapsto v_1 \wedge v_2 + v_1 \cdot v_2$.
Therefore, it is an isomorphism.
This shows that the algebra~$\bB$ has no quadratic relations.
\end{example}

\begin{example}
\label{ex:bb3}
Now consider the multiplication map
\begin{equation*}
V \otimes V \otimes V = \bB_1 \otimes \bB_1 \otimes \bB_1 \to \bB_3 = \bw3V \oplus \Big(V \otimes \Sym^2V\Big).
\end{equation*}
This time~\eqref{eq:clifford-multiplication} shows that this map is given by the formula
\begin{equation*}
v_1 \otimes v_2 \otimes v_3 \mapsto 
v_1 \wedge v_2 \wedge v_3 +
\Big(v_1 \otimes (v_2 \cdot v_3) - v_2 \otimes (v_1 \cdot v_3) + v_3 \otimes (v_1 \cdot v_2) \Big).
\end{equation*}
Its restriction to~$V \otimes \Sym^2V \subset V \otimes V \otimes V$
is an isomorphism onto the second summand in~$\bB_3$,
its restriction to~$\bw3V \subset V \otimes V \otimes V$
is an isomorphism onto the first summand in~$\bB_3$,
and its restriction to
\begin{equation}
\label{eq:br3}
\rR_3 \coloneqq
\Big\langle v_1 \otimes v_2 \otimes v_3 + v_2 \otimes v_1 \otimes v_3 - v_3 \otimes v_2 \otimes v_1 - v_3 \otimes v_1 \otimes v_2
\ \Big\vert\
v_1,v_2,v_3 \in V \Big\rangle
\subset V \otimes V \otimes V
\end{equation}
is zero.
Furthermore, it is easy to check that as a representation of~$\GL(V)$, we have~$\rR_3 \cong \Sigma^{2,1}V$,
and it follows that the map~$\bB_1 \otimes \bB_1 \otimes \bB_1 \to \bB_3$ is surjective,
and the space~$\rR_3$ is its kernel, and therefore~$\rR_3$ is the space of cubic relations of~$\bB$.
\end{example}

\begin{example}
\label{ex:bb4}
A similar computation shows that the map~$\bB_1^{\otimes 4} \to \bB_4$ is surjective and
\begin{equation*}
\Ker\Big( \bB_1^{\otimes 4} \to \bB_4 \Big) = 
\Big(\Sigma^{3,1}V\Big)^{\oplus 2} \oplus \Big(\Sigma^{2,2}V\Big) \oplus \Big(\Sigma^{2,1,1}V\Big)^{\oplus 2}.
\end{equation*}
On the other hand,
\begin{equation*}
\Big(\Sigma^{2,1}V \otimes V \Big) \oplus \Big(V \otimes \Sigma^{2,1}V \Big) \cong
\Big(\Sigma^{3,1}V\Big)^{\oplus 2} \oplus \Big(\Sigma^{2,2}V\Big)^{\oplus 2} \oplus \Big(\Sigma^{2,1,1}V\Big)^{\oplus 2}.
\end{equation*}
This computation suggests that the syzygy space of the algebra~$\bB$ in degree~$4$ is~$\Sigma^{2,2}V$.
\end{example}

In the next subsection we check that this is indeed the case and prove a generalization of this fact.
In particular, we describe the dual $\mathrm{A}_\infty$-algebra of~$\bB$.

\subsection{The Clifford--Koszul complex}
\label{ss:ckc}

In this subsection we introduce a Clifford analog of the Koszul complex.
To state the result we recall some notation.

Given a Young diagram~$\alpha$ we denote by~$\alpha^T$ the transposed Young diagram.
We also write~$\ell_{\diag}(\alpha)$ for the length of the diagonal of~$\alpha$, i.e.,
\begin{equation*}
\ell_{\diag}(\alpha) = \max\{t \mid \alpha_t \ge t\}.
\end{equation*}
We denote by~$\SYD_n \subset \YD_n$ the set of all symmetric Young diagrams, i.e.,
\begin{equation*}
\SYD_n = \{ \alpha \in \YD_n \mid \alpha^T = \alpha \}.
\end{equation*}
Symmetric diagrams satisfy the following obvious property:
\begin{equation}
\label{eq:syd-mod-len}
|\alpha| \equiv \ell_\diag(\alpha) \bmod 2
\qquad\text{if~$\alpha \in \SYD_n$.}
\end{equation} 
For symmetric Young diagrams~$\beta \subset \alpha$ such that~$|\alpha| = |\beta| + 1$
by the Littlewood--Richardson rule (see~\cite[Theorem~2.3.4]{W}) there is a canonical morphism of Schur functors
\begin{equation*}
\Sigma^\alpha V \xrightarrow{\ \theta_1\ } \Sigma^\beta  V \otimes V,
\end{equation*}
and in the case where~$\beta \subset \alpha$ and~$|\alpha| = |\beta| + 2$
there are two canonical morphisms of Schur functors
\begin{equation*}
\Sigma^\alpha V \xrightarrow{\ \theta_2\ } \Sigma^\beta  V \otimes \bw2V,
\qquad 
\Sigma^\alpha V \xrightarrow{\ \theta_2^+\ } \Sigma^\beta  V \otimes \Sym^2V,
\end{equation*}
respectively.
We also denote by
\begin{equation*}
V \otimes \cB_k \xrightarrow{\ \kappa_1\ } \cB_{k+1},
\qquad \text{and} \qquad
\bw2V \otimes \cB_k \xrightarrow{\ \kappa_2\ } \cB_{k+2},
\end{equation*}
the morphisms induced by the embeddings~$\bw{i}V \otimes \cO \hookrightarrow \cB_i$ and the Clifford multiplication.
Finally, we write~$\bq \colon \Sym^2V \otimes \cO \to \cO(1)$ for the tautological epimorphism on~$\P(\Sym^2V^\vee)$.

\begin{theorem}
\label{thm:clifford-koszul}
There is a $\PGL(V)$-equivariant exact sequence of $\cB_0$-modules on~$\P(\Sym^2V^\vee)$
\begin{equation}
\label{eq:clifford-koszul}
0 \to \cG_{n(n+1)/2} \to \cG_{n(n+1)/2-1} \to \dots \to \cG_1 \to \cG_0 \to 0,
\end{equation}
where 
\begin{equation}
\label{eq:ck-cgi}
\cG_i \coloneqq \bigoplus_{\substack{\alpha \in \SYD_n \\[.5ex]|\alpha| + \ell_{\diag}(\alpha) = 2i}} 
\Sigma^\alpha V \otimes \cB_{- |\alpha|}.
\end{equation}
All differentials in~\eqref{eq:clifford-koszul} are linear combinations of three types of maps:
\begin{align}
\label{eq:dg1}
\Sigma^\alpha V \otimes \cB_{- |\alpha|} \xrightarrow{\ \theta_1\ }
\Sigma^\beta V \otimes V \otimes \cB_{- |\alpha|} \xrightarrow{\ \kappa_1\ }
\Sigma^\beta V \otimes \cB_{- |\beta|} , 
&& \text{if~$\beta \subset \alpha$ and~$|\alpha| - |\beta| = 1$,}
\\
\label{eq:dg2}
\Sigma^\alpha V \otimes \cB_{- |\alpha|} \xrightarrow{\ \theta_2\ }
\Sigma^\beta V \otimes \bw2V \otimes \cB_{- |\alpha|} \xrightarrow{\ \kappa_2\ }
\Sigma^\beta V \otimes \cB_{- |\beta|} , 
&& \text{if~$\beta \subset \alpha$ and~$|\alpha| - |\beta| = 2$,}
\\
\label{eq:dg3}
\Sigma^\alpha V \otimes \cB_{- |\alpha|} \xrightarrow{\ \theta_2^+\ }
\Sigma^\beta V \otimes \Sym^2V \otimes \cB_{- |\alpha|} \xrightarrow{\ \bq\ }
\Sigma^\beta V \otimes \cB_{- |\beta|} , 
&& \text{if~$\beta \subset \alpha$ and~$|\alpha| - |\beta| = 2$.}
\end{align}
\end{theorem}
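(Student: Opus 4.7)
The plan is to construct the complex explicitly, check $d^2=0$, and then establish exactness.

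For the construction, observe that for $\alpha,\beta \in \SYD_n$ with $\beta \subsetneq \alpha$ the symmetry of both partitions forces the skew-shape $\alpha/\beta$ to consist either of a single diagonal box (so $|\alpha|-|\beta|=1$ and $\ell_\diag(\alpha)=\ell_\diag(\beta)+1$) or of a symmetric pair of off-diagonal boxes (so $|\alpha|-|\beta|=2$ and $\ell_\diag(\alpha)=\ell_\diag(\beta)$); in either case $(|\alpha|+\ell_\diag(\alpha))-(|\beta|+\ell_\diag(\beta))=2$, so all three maps~\eqref{eq:dg1}--\eqref{eq:dg3} have the correct degree. By Pieri's rule the morphisms $\theta_1, \theta_2, \theta_2^+$ are one-dimensional, so defining the differential amounts to fixing a scalar coefficient $c_{\beta,\alpha}$ for each admissible pair.

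Checking $d^2=0$ reduces, by $\GL(V)$-equivariance and Schur's lemma, to one scalar identity for each pair $\gamma \subset \alpha$ in $\SYD_n$ with $(|\alpha|+\ell_\diag(\alpha))-(|\gamma|+\ell_\diag(\gamma))=4$. For such a pair the intermediate partitions $\beta$ form a small enumerable set, and the resulting quadratic relations on $(c_{\beta,\alpha})$ reduce via~\eqref{eq:clifford-multiplication} to identities between Pieri operators on Schur functors and the tautological epimorphism~$\upsilon$. I would pin down the signs by examining the low-degree cases~\eqref{eq:simple-resolution-explicit} and Examples~\ref{ex:bb3} and~\ref{ex:bb4}; the cancellation in each case is an avatar of the Clifford relation $v_1 v_2 + v_2 v_1 = 2\, v_1 \cdot v_2$ together with the standard Koszul sign conventions built into~\eqref{eq:clifford-multiplication}.

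The main obstacle is exactness. I would proceed in two steps. First, using Theorem~\ref{thm:clifford-master} to express the $\GL(V)$-equivariant graded characters, the Euler-characteristic vanishing for the complex reduces to a signed Littlewood-type identity
\[
\left(\sum_{\alpha \in \SYD_n} (-1)^{(|\alpha|+\ell_\diag(\alpha))/2}\, s_\alpha\right)
\cdot \left(\sum_{\lambda \in \YD_n} s_\lambda\right) = 1
\]
in the ring of symmetric polynomials in $n$ variables, which should be a consequence of Littlewood's classical identities. Second, to promote Euler-characteristic cancellation to genuine exactness, I would invoke minimality of the complex: any surviving cohomology sheaf would produce a $\GL(V)$-equivariant $\cB_0$-module whose character appears with positive multiplicity in one of the $\cG_i$, contradicting the identity above. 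An alternative is to restrict to the Azumaya locus of $\cB_0$, where the complex becomes a classical Koszul complex via Morita equivalence, and then extend across the discriminant using that each $\cG_i$ is locally projective over $\cB_0$ (Lemma~\ref{lem:clifford-twist}).
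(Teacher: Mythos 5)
Your route is genuinely different from the paper's, but it has two gaps, and the exactness step is fatal as written. The Euler-characteristic computation (your Littlewood-type identity) is only a necessary condition: cohomology sheaves of the complex sitting in different homological degrees cancel in any character or K-theoretic count, so the appearance of a character ``with positive multiplicity in one of the $\cG_i$'' produces no contradiction, and there is no minimality principle that upgrades a vanishing alternating sum to exactness. Your fallback is equally problematic: a complex of locally projective $\cB_0$-modules that is exact on the complement of the discriminant need not be exact along it (already $\cO(-1)\xrightarrow{\ s\ }\cO$ on $\P^1$ is exact off the zero of $s$), and since the discriminant is a divisor while the complex has length $n(n+1)/2$, no acyclicity-by-codimension criterion applies; moreover the claim that Morita equivalence on the Azumaya locus turns the complex into a ``classical Koszul complex'' is unsubstantiated --- its terms are Schur functors $\Sigma^\alpha V$ indexed by symmetric diagrams, not exterior powers. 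There is also a gap at the construction stage: you posit coefficients $c_{\beta,\alpha}$ with $d^2=0$ and propose to fix signs from low-degree examples, but exhibiting a consistent nonzero solution for all of $\SYD_n$ is precisely the hard combinatorial content you would need to supply; the paper deliberately avoids this, and Remark~\ref{rem:ck-differentials} even states that the explicit differentials (and whether each of \eqref{eq:dg1}--\eqref{eq:dg3} occurs with nonzero coefficient) are not known.

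For contrast, the paper imports both existence and exactness at once: it takes the $\Spin(\DV)$-equivariant resolution \eqref{eq:ogr-gr-spinor} of the pushforward $j_*\cS$ of the spinor line bundle on $\OGr_+(n,\DV)\subset\Gr(n,\DV)$ (Theorem~\ref{thm:fk}, from~\cite{FK18}), pulls it back along the graph embedding $\gamma(q)=\{(v,q(v))\}$ of $\Sym^2V^\vee\setminus\{0\}$, whose image misses $\OGr(n,\DV)$ by Lemma~\ref{lem:q-upsilon}, so the pullback is automatically exact, and then descends to $\P(\Sym^2V^\vee)$ using the $\tGL(V)$-equivariant structure and the twist by $(\det V)^{\frac12}$; the identification of the spinor action of $(v,q(v))$ with Clifford multiplication is what makes the differentials $\cB_0$-linear and of the shape \eqref{eq:dg1}--\eqref{eq:dg3}. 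If you want to pursue your direct approach, the part that must be replaced is the exactness argument --- e.g.\ by an honest local computation at every quadric $q$ (including degenerate ones), not just on the Azumaya locus --- and the $d^2=0$ verification must actually be carried out rather than extrapolated from degrees $\le 4$.
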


\begin{remark}
\label{rem:ck-differentials}
We do not know an explicit expression for the differentials in~\eqref{eq:clifford-koszul}.
In particular, we do not know if all the maps~\eqref{eq:df1}, \eqref{eq:df2}, and~\eqref{eq:df3}
appear in the differentials with nonzero coefficients.
\end{remark}

The complex~\eqref{eq:clifford-koszul} will be referred to as {\sf the Clifford--Koszul complex}.
To give an idea of how it looks like, we provide two examples.

\begin{example}
\label{ex:ck-2}
If~$n = 2$, the Clifford--Koszul complex takes the form:
\begin{equation*}
0 \to 
\Sigma^{2,2}V \otimes \cB_{-4} \to 
\Sigma^{2,1}V \otimes \cB_{-3} \to 
V \otimes \cB_{-1} \to 
\cB_0 \to 
0.
\end{equation*}
Under the equivalence~$\Db(\P(\Sym^2(V^\vee)), \cB_0) \simeq \Db(\P(V^\vee) \times \P(V^\vee))$ of Example~\ref{ex:cls-2}
it corresponds to the exact sequence
\begin{equation*}
0 \to \cO(-2,-2) \to \cO(-1,-2)^{\oplus 2} \to \cO(0,-1)^{\oplus 2} \to \cO \to 0,
\end{equation*}
a gluing of~$0 \to \cO(-2,-2) \to \cO(-1,-2)^{\oplus 2} \to \cO(0,-2) \to 0$
and~$0 \to \cO(0,-2) \to \cO(0,-1)^{\oplus 2} \to \cO \to 0$.
\end{example}

\begin{example}
\label{ex:ck-3}
If~$n = 3$, the Clifford--Koszul complex takes the form:
\begin{multline*}
0 \to \Sigma^{3,3,3}V \otimes \cB_{-9} 
\to \Sigma^{3,3,2}V \otimes \cB_{-8}
\to \Sigma^{3,2,1}V \otimes \cB_{-6}
\\
\to \Sigma^{3,1,1}V \otimes \cB_{-5} \oplus \Sigma^{2,2}V \otimes \cB_{-4}
\to \Sigma^{2,1}V \otimes \cB_{-3}
\to V \otimes \cB_{-1} 
\to \cB_0 \to 0.
\end{multline*}
\end{example}

In general, there are symmetric Young diagrams with the same number of boxes but different length of the diagonals;
therefore, the same summand~$\cB_{-i}$ may appear in different terms of~\eqref{eq:clifford-koszul}.

We postpone the proof of Theorem~\ref{thm:clifford-koszul} until Subsection~\ref{ss:proof-ck}, and now we deduce some implications.

\subsection{Implications of Clifford--Koszul complex}

First of all, using Clifford--Koszul complex we construct a free resolution of the simple $\bB$-module;
this gives a proof of Theorem~\ref{thm:intro-syzygies} from the introduction.

\begin{proof}[Proof of Theorem~\textup{\ref{thm:intro-syzygies}}]
We consider the hypercohomology spectral sequence obtained
by applying the functor
\begin{equation*}
\cG \mapsto \bigoplus_{i = 0}^\infty \rH^\bullet(\P(\Sym^2V^\vee), \cG(\tfrac{i}2))
\end{equation*}
to the exact sequence~\eqref{eq:clifford-koszul} and using Lemma~\ref{lem:cohomology-clifford} to compute the cohomology.
For~$\alpha \in \SYD_n$ we have~\mbox{$|\alpha| \le n^2$}, hence~$\rH^{n(n+1)/2}(\P(\Sym^2V^\vee), \cB_{-n^2}) = \kk$
is the only higher cohomology appearing in the spectral sequence,
while the zero cohomology terms are equal to the graded modules~$\bF_i$ as in~\eqref{eq:terms}.
Therefore, the spectral sequence converges to the exact sequence~\eqref{eq:simple-resolution}.
\end{proof}

The rightmost part of the resolution~\eqref{eq:simple-resolution} of Theorem~\ref{thm:intro-syzygies}
in degree~$i \ge 1$ looks like
\begin{equation}
\label{eq:simple-module-resolution}
\dots \to
\Big(\Sigma^{3,1,1}V \otimes \bB_{i-5}\Big) \oplus \Big(\Sigma^{2,2}V \otimes \bB_{i-4}\Big) \to
\Sigma^{2,1}V \otimes \bB_{i-3} \to
V \otimes \bB_{i-1} \to
\bB_i \to
0.
\end{equation}
It allows us to make a few important conclusions about the structure of~$\bB$.

\begin{corollary}
\label{cor:intro-generation}
The graded algebra~$\bB$ is generated by its first component;
in other words, the map 
\begin{equation*}
\bT(V) = \bT(\bB_1) \to \bB
\end{equation*}
from the tensor algebra of~$V = \bB_1$ induced by the multiplication in~$\bB$ is surjective.
\end{corollary}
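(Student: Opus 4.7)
The plan is to read off the corollary directly from Theorem~\ref{thm:intro-syzygies}. The resolution~\eqref{eq:simple-resolution} is an exact sequence of graded $\bB$-modules with cokernel the simple module $\kk$, which is concentrated in internal degree $0$. Restricting to any strictly positive internal degree $i \ge 1$, the simple module disappears, and we obtain an exact sequence
\begin{equation*}
\cdots \to (\bF_2)_i \to (\bF_1)_i \to (\bF_0)_i \to 0.
\end{equation*}

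Next I would identify $\bF_0$ and $\bF_1$ explicitly from the definition~\eqref{eq:terms}. The empty diagram $\varnothing$ is the unique element of $\SYD_n$ with $|\alpha| + \ell_\diag(\alpha) = 0$, giving $\bF_0 = \bB$; and the one-box diagram $(1)$ is the unique element with $|\alpha| + \ell_\diag(\alpha) = 2$, giving $\bF_1 = V \otimes \bB(-1)$. The differential $\rd_1 \colon \bF_1 \to \bF_0$ is a $\GL(V)$-equivariant map of free graded $\bB$-modules; its restriction to the generating subspace $V \otimes \kk \subset \bF_1$ lands in $\bB_1 = V$, so by Schur's lemma it is a scalar multiple of the identity, and this scalar is nonzero since the sequence is exact. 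Consequently, in internal degree $i$, the map $\rd_1$ is (up to scalar) the multiplication map $V \otimes \bB_{i-1} \to \bB_i$, which is therefore surjective for every $i \ge 1$.

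Finally, an induction on $i$ with base case $\bB_0 = \kk$ shows that the iterated multiplication $V^{\otimes i} \to \bB_i$ is surjective for all $i \ge 0$, which is exactly the statement that $\bT(V) \to \bB$ is surjective. There is no substantial obstacle here, as the essential content has already been packaged into Theorem~\ref{thm:intro-syzygies}; the corollary is simply the reading of the tail of the Clifford--Koszul resolution.
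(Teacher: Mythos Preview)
Your proof is correct and follows essentially the same approach as the paper: both extract from the exactness of~\eqref{eq:simple-resolution} in positive degree that the map~$V \otimes \bB_{i-1} \to \bB_i$ is surjective, and then conclude by induction. Your use of Schur's lemma to identify~$\rd_1$ with the multiplication map is a welcome justification of a point the paper simply asserts.
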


\begin{proof}
Indeed, the map~$\bB_1 \otimes \bB_{i-1} = V \otimes \bB_{i-1} \to \bB_{i}$ in~\eqref{eq:simple-module-resolution}
is the multiplication map.
It is surjective for~$i \ge 1$, hence by induction so is the map~$\bB_1^{\otimes i} \to \bB_i$, as required.
\end{proof}

\begin{corollary}
\label{cor:intro-relations}
The kernel of the epimorphism~$\bT(V) \to \bB$ is the two-sided ideal in~$\bT(V)$
generated by the subspace of cubic relations~$\rR_3 \subset V^{\otimes 3} \cong \bB_1^{\otimes 3}$ defined in~\eqref{eq:br3}.
\end{corollary}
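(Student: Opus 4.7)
The plan is to extract the claim from the resolution~\eqref{eq:simple-resolution} of Theorem~\ref{thm:intro-syzygies}. Set $J \coloneqq \ker(\bT(V) \to \bB)$ and let $I \subset \bT(V)$ denote the two-sided ideal generated by $\Sigma^{2,1}V \subset V^{\otimes 3}$. The inclusion $I \subseteq J$ is immediate from Example~\ref{ex:bb3}: there $\Sigma^{2,1}V$ is identified as the kernel of the multiplication $\bB_1^{\otimes 3} \to \bB_3$, hence $\Sigma^{2,1}V \subset J$, and since $J$ is a two-sided ideal it contains the ideal generated by $\Sigma^{2,1}V$.

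For the reverse inclusion, the key step is to verify that the resolution~\eqref{eq:simple-resolution} is the \emph{minimal} graded free resolution of the simple $\bB$-module. A graded free resolution is minimal exactly when all its differentials vanish after applying $-\otimes_\bB\kk$. Each differential of~\eqref{eq:simple-resolution} decomposes into $\bB$-linear components $\Sigma^\alpha V \otimes \bB(-|\alpha|) \to \Sigma^\beta V \otimes \bB(-|\beta|)$ with $\alpha$ ranging over summands of $\bF_k$ and $\beta$ over summands of $\bF_{k-1}$. After $-\otimes_\bB\kk$, such a component becomes a $\GL(V)$-equivariant morphism between the irreducible representations $\Sigma^\alpha V$ and $\Sigma^\beta V$ placed in internal degrees $|\alpha|$ and $|\beta|$ respectively. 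If $|\alpha| \ne |\beta|$ the map vanishes by degree. If $|\alpha| = |\beta|$, the defining conditions $|\alpha| + \ell_\diag(\alpha) = 2k$ and $|\beta| + \ell_\diag(\beta) = 2(k-1)$ force $\ell_\diag(\alpha) \ne \ell_\diag(\beta)$, so $\alpha \ne \beta$; then $\Sigma^\alpha V$ and $\Sigma^\beta V$ are non-isomorphic irreducible $\GL(V)$-representations and Schur's lemma again forces vanishing. Thus~\eqref{eq:simple-resolution} is minimal.

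Minimality yields $\Tor_2^\bB(\kk,\kk) \cong \bF_2 \otimes_\bB \kk \cong \Sigma^{2,1}V$ placed in internal degree~$3$. By the standard identification $\Tor_2^\bB(\kk,\kk) \cong J/(V \cdot J + J \cdot V)$ as graded $\GL(V)$-modules, we obtain $J_3/(V \cdot J_2 + J_2 \cdot V) \cong \Sigma^{2,1}V$ while $J_d/(V \cdot J_{d-1} + J_{d-1} \cdot V) = 0$ for all $d \ne 3$. Since $J_2 = 0$ by Example~\ref{ex:bb2}, this gives $J_3 = \Sigma^{2,1}V$, and by induction on $d$ we conclude $J_d = V \cdot J_{d-1} + J_{d-1} \cdot V \subseteq V \cdot I_{d-1} + I_{d-1} \cdot V \subseteq I_d$ for $d > 3$. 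Combined with the trivial base cases $d \le 3$, this yields $J \subseteq I$, completing the proof.

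The main obstacle is the minimality of the resolution; once this is established, the remainder is a formal consequence of standard homological algebra linking $\Tor_2^\bB(\kk,\kk)$ to the minimal generators of the relation ideal. Minimality itself reduces to the combinatorial observation above that the indexing sets of summands in $\bF_k$ and $\bF_{k-1}$ satisfy disjoint constraints on $|\alpha|+\ell_\diag(\alpha)$ and therefore cannot produce matching Schur factors.
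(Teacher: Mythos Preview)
Your proof is correct and follows the same approach as the paper: both extract the relations from the resolution~\eqref{eq:simple-resolution} via the standard identification of~$\Tor_2^\bB(\kk,\kk)$ with the minimal relation space, which the paper simply cites as~\cite[Proposition~1.5.2]{PP05}. Your explicit verification of minimality (left implicit in the paper) is a welcome addition.
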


\begin{proof}
The generation of the two-sided ideal of~$\bT(V) \to \bB$ by the syzygy of the simple module is standard
(see, e.g., \cite[Proposition~1.5.2]{PP05}).
\end{proof}

Using Theorem~\ref{thm:intro-syzygies} it is also easy 
to find the dual $\rA_\infty$-algebra of~$\bB$.

\begin{corollary}
\label{cor:intro-bb-shriek}
The dual $\rA_\infty$-algebra~$\bB^! \coloneqq \Ext^\bullet_\bB(\kk,\kk)$ is isomorphic to
\begin{equation*}
\bB^! \cong
\bigoplus_{\alpha \in \SYD_n} \Sigma^\alpha V^\vee \left(|\alpha|\left)\left[-\tfrac{|\alpha| + \ell_{\diag}(\alpha)}2\right]\right.\right., 
\end{equation*}
where~$(-)$ and~$[-]$ correspond to the shifts of the internal and homological grading, respectively.
\end{corollary}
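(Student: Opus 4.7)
The plan is to extract the bigraded vector space underlying $\bB^!$ directly from the free resolution of the simple $\bB$-module $\kk$ constructed in Theorem~\ref{thm:intro-syzygies}. Once I verify that this resolution is minimal, applying $\Hom_\bB(-,\kk)$ produces a complex with zero differentials, and its $i$-th term gives $\Ext^i_\bB(\kk,\kk)$ on the nose.

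The key step is minimality. The resolution
$$0 \to \bF_{n(n+1)/2} \to \dots \to \bF_1 \to \bF_0 \to \kk \to 0,$$
with $\bF_i = \bigoplus \Sigma^\alpha V \otimes \bB(-|\alpha|)$ summed over $\alpha \in \SYD_n$ with $|\alpha|+\ell_\diag(\alpha)=2i$, will be minimal as soon as in every nonzero block of every differential $\bF_i\to\bF_{i-1}$, say from the summand indexed by $\alpha$ to the summand indexed by $\beta$, one has $|\alpha|>|\beta|$; for then the block is given by multiplication by an element of $\bB_{|\alpha|-|\beta|}\subset\bB_{>0}$, which becomes zero after applying $\Hom_\bB(-,\kk)$. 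This inequality is a short combinatorial check about nested symmetric Young diagrams $\beta\subset\alpha$ subject to $|\alpha|+\ell_\diag(\alpha)=|\beta|+\ell_\diag(\beta)+2$: symmetry forces that one either removes a single diagonal box (so $|\alpha|-|\beta|=1$ and $\ell_\diag$ drops by one) or removes a symmetric pair of off-diagonal boxes (so $|\alpha|-|\beta|=2$ and $\ell_\diag$ is unchanged); in both scenarios $|\alpha|>|\beta|$, as required.

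Granted minimality, $\Hom_\bB(\bF_\bullet,\kk)$ has vanishing differentials, so $\Ext^i_\bB(\kk,\kk)\cong\Hom_\bB(\bF_i,\kk)$ as graded vector spaces. A single free summand $\Sigma^\alpha V\otimes\bB(-|\alpha|)$ contributes $\Sigma^\alpha V^\vee$ in internal degree dictated by the shift, that is $\Sigma^\alpha V^\vee(|\alpha|)$; indexing by homological degree $i=(|\alpha|+\ell_\diag(\alpha))/2$ and summing over $\alpha\in\SYD_n$ yields
$$\bB^! \;\cong\; \bigoplus_{\alpha \in \SYD_n} \Sigma^\alpha V^\vee(|\alpha|)\bigl[-\tfrac{|\alpha|+\ell_\diag(\alpha)}{2}\bigr],$$
as claimed.

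The only substantive obstacle is the minimality check, and it is essentially combinatorial; once it is in hand the rest is bookkeeping with internal and homological gradings. The corollary is stated at the level of the underlying bigraded vector space of the $\rA_\infty$-algebra, and the implicit $\rA_\infty$-structure on $\Ext^\bullet_\bB(\kk,\kk)$ is the standard one induced by homotopy transfer from the dg endomorphism algebra of any projective resolution of $\kk$.
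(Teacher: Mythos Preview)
Your argument is correct and is exactly the (implicit) approach of the paper, which states the corollary as an immediate consequence of Theorem~\ref{thm:intro-syzygies} without further proof. The one point worth tightening is your tacit assumption that a nonzero block of the differential runs only between summands with~$\beta\subset\alpha$: this is not part of the \emph{statement} of Theorem~\ref{thm:intro-syzygies}, but it is justified either by the description of the Clifford--Koszul differentials in Theorem~\ref{thm:clifford-koszul} (from which the differentials of~$\bF_\bullet$ are induced via the spectral sequence), or more directly by the $\GL(V)$-equivariance recorded in Theorem~\ref{thm:intro-syzygies} --- if~$|\alpha|=|\beta|$ then the block lies in~$\Hom_{\GL(V)}(\Sigma^\alpha V,\Sigma^\beta V)=0$, since the constraint on~$\ell_{\diag}$ forces~$\alpha\ne\beta$.
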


\begin{remark}
If~$\alpha$ is a Young diagram, $\ell = \ell_\diag(\alpha)$
and~$\gamma_1, \dots, \gamma_\ell$ are the hooks of~$\alpha$
(i.e., symmetric diagrams with~$\ell_\diag(\gamma_i) = 1$ and~$|\gamma_i| = 2(\alpha_i-i) + 1$)
then by the Littlewood--Richardson rule there is
a unique $\GL(V)$-equivariant morphism~$\Sigma^{\gamma_1}V \otimes \dots \otimes \Sigma^{\gamma_\ell}V \to \Sigma^{\alpha}V$.
Similarly, if~$\beta_1$ and~$\beta_2$ are the Young diagrams whose hooks are~$\gamma_1, \dots, \gamma_k$
and~$\gamma_{k+1}, \dots, \gamma_\ell$, respectively, there is a unique $\GL(V)$-equivariant
morphism~$\Sigma^{\beta_1}V \otimes \Sigma^{\beta_2}V \to \Sigma^{\alpha}V$.
We expect that the multiplication~$\bm_2$ in~$\bB^!$ is given by these morphisms.
\end{remark}

It would be interesting to describe the higher multiplications in~$\bB^!$.

\subsection{The resolution of the spinor line bundle of~$\OGr(n,2n)$}

In this subsection we recall from~\cite{FK18} a complex of vector bundles on the Grassmannian~$\Gr(n,2n)$
that will be used in Subsection~\ref{ss:proof-ck} to deduce Theorem~\ref{thm:clifford-koszul}.

Consider the space
\begin{equation*}
\DV \coloneqq V \oplus V^\vee
\end{equation*}
endowed with a symmetric bilinear form~$\bq_W \in \Sym^2(\DV^\vee)$ defined by
\begin{equation}
\label{eq:w-form}
\bq_W((v_1,f_1), (v_2,f_2)) \coloneqq f_1(v_2) + f_2(v_1),
\end{equation}
where~$v_i \in V$ and~$f_i \in V^\vee$.
Note that the subspaces~$V,V^\vee \subset \DV$ are Lagrangian.
Denote by~$\SO(\DV)$ the special orthogonal group of~$(\DV,\bq_W)$ and by~$\Spin(\DV)$ the corresponding $\Spin$-group
(the universal covering of~$\SO(\DV)$), so that~$\SO(\DV) = \Spin(\DV) / \upmu_2$.

The group~$\GL(V)$ acts on~$\DV$ by~$g \cdot (v,f) = (gv, fg^{-1})$,
preserving the form~\eqref{eq:w-form};
hence we have an embedding~$\GL(V) \subset \SO(\DV)$,
which lifts to an embedding
\begin{equation}
\label{eq:tgl}
\tGL(V) \coloneqq 
\GL(V) \times_{\SO(\DV)} \Spin(\DV) \cong
\{(t,g) \in \Gm \times \GL(V) \mid \det(g) = t^2 \} \hookrightarrow \Spin(\DV).
\end{equation}
Note that the center of~$\tGL(V)$ is the group
\begin{equation}
\label{eq:cz-tgl}
\cZ_{\tGL} \coloneqq 
\{(t,g) \in \tGL(V) \mid g = s \cdot \id \} \cong 
\{ (t,s) \in \Gm \times \Gm \mid t^2 = s^n \} \cong
\begin{cases}
\Gm, & \text{if~$n$ is odd}\\
\upmu_2 \times \Gm, & \text{if~$n$ is even}
\end{cases}
\end{equation} 
and it contains the subgroup
\begin{equation}
\label{eq:cz}
\cZ_{\Spin} \coloneqq 
\{(t,g) \in \tGL(V) \mid g = \pm \id \} \cong 
\{ (t,s) \in \Gm \times \upmu_2 \mid t^2 = s^n \} \cong
\begin{cases}
\upmu_4, & \text{if~$n$ is odd}\\
\upmu_2 \times \upmu_2, & \text{if~$n$ is even}
\end{cases}
\end{equation} 
which under the embedding~\eqref{eq:tgl} identifies with the center of~$\Spin(\DV)$.

We denote by~$(\det V)^{\frac12}$ the one-dimensional representation of the group~$\tGL(V)$
induced by the projection~$\tGL(V) \to \Gm$, $(t,g) \mapsto t$.
Note that by~\eqref{eq:tgl} its tensor square is isomorphic to~$\det(V)$.

We write~$\rcB(\DV) = \rcB_0(\DV) \oplus \rcB_1(\DV)$ for the $\ZZ/2$-graded Clifford algebra of~$\DV$.
We also denote by~$\bS_0$ and~$\bS_1$ the spinor modules over~$\rcB_0(\DV)$.
Note that~$\bS_i$ has a structure of representation of~$\Spin(\DV)$,
and the restrictions of~$\bS_i$ to~$\tGL(V)$ can be identified with the following representations:
\begin{equation}
\label{eq:bs01}
\bS_0 \cong \left(\bigoplus_{i=0}^n \bw{2i}V\right) \otimes (\det V)^{-\frac12}
\qquad\text{and}\qquad 
\bS_1 \cong \left(\bigoplus_{i=0}^{n-1} \bw{2i+1}V\right) \otimes (\det V)^{-\frac12}.
\end{equation}
We extend the definition of~$\bS_k$ for~$k \in \{0,1\}$ to all integers~$k$ by setting~$\bS_{k+2} \coloneqq \bS_k$.
Note that the right Clifford action of~$\DV = V \oplus V^\vee \subset \rcB_1(\DV)$ on~$\bS_0$ and~$\bS_1$
is given by the maps
\begin{equation*}
\bw{k}V \otimes V \to \bw{k+1}V
\qquad\text{and}\qquad 
\bw{k}V \otimes V^\vee \to \bw{k-1}V,
\end{equation*}
where the first is wedge product and the second is convolution.
Note also that the above maps (and the multiplication in the algebra) are $\Spin(\DV)$-equivariant.

Let~$\OGr_+(n,\DV) \subset \Gr(n,\DV)$ be the connected component
of the orthogonal isotropic Grassmannian containing~$[V]$,
let~$j \colon \OGr_+(n,\DV) \hookrightarrow \Gr(n,\DV)$ be the embedding morphism,
and let~$\cS$ be the spinor line bundle on~$\OGr_+(n,\DV)$, so that~$\cS \otimes \cS \cong j^*\cO_{\Gr(n,\DV)}(1)$.
Finally, let~$\cU \subset \DV \otimes \cO$ denote the tautological subbundle on~$\Gr(n,\DV)$.

Recall that for Young diagrams~$\beta \subset \alpha$ there are canonical morphisms of Schur functors
\begin{equation*}
\Sigma^\alpha\cU \to \cU \otimes \Sigma^\beta \cU,
\qquad 
\Sigma^\alpha\cU \to \bw2\cU \otimes \Sigma^\beta \cU,
\qquad 
\Sigma^\alpha\cU \to \Sym^2\cU \otimes \Sigma^\beta \cU
\end{equation*}
(induced by the maps~$\theta_1$, $\theta_2$, and~$\theta_2^+$ defined in~\S\ref{ss:ckc}),
where in the first case we assume~$|\alpha| - |\beta| = 1$ and in the second~$|\alpha| - |\beta| = 2$.
Composing these maps with the tautological embeddings~$\cU \hookrightarrow \DV \otimes \cO$,
$\bw2\cU \hookrightarrow \bw2\DV \otimes \cO$, and~$\Sym^2\cU \hookrightarrow \Sym^2\DV \otimes \cO$,
we obtain morphisms that we denote by
\begin{equation*}
\Sigma^\alpha\cU \xrightarrow{\ \vartheta_1\ } \DV \otimes \Sigma^\beta \cU,
\qquad 
\Sigma^\alpha\cU \xrightarrow{\ \vartheta_2\ } \bw2\DV \otimes \Sigma^\beta \cU,
\qquad 
\Sigma^\alpha\cU \xrightarrow{\ \vartheta_2^+\ } \Sym^2\DV \otimes \Sigma^\beta \cU,
\end{equation*}
respectively.
We also denote by
\begin{equation*}
\bS_k \otimes \DV \xrightarrow{\ \varkappa_1\ } \bS_{k+1},
\qquad \text{and} \qquad
\bS_k \otimes \bw2\DV \xrightarrow{\ \varkappa_2\ } \bS_{k+2},
\end{equation*}
the morphisms induced by the embeddings~$\bw{i}\DV \hookrightarrow \rcB_i(\DV)$ and the action of~$\rcB_i(\DV)$ on~$\bS_k$.
Finally, we write~$\bq_W \colon \Sym^2\DV \to \kk$ for the map induced by the quadratic form~\eqref{eq:w-form}.

\begin{theorem}[\cite{FK18}]
\label{thm:fk}
There is an exact $\Spin(\DV)$-equivariant sequence of vector bundles on~$\Gr(n,\DV)$
\begin{equation}
\label{eq:ogr-gr-spinor}
0 \to \cF_{n(n+1)/2} \to \cF_{n(n+1)/2-1} \to \dots \to \cF_1 \to \cF_0 \to j_*\cS \to 0,
\end{equation}
where
\begin{equation}
\label{eq:cfi-gr}
\cF_i \coloneqq \bigoplus_{\substack{\alpha \in \SYD_n\\[.5ex]|\alpha| + \ell_{\diag}(\alpha) = 2i}} 
\bS_{|\alpha|} \otimes \Sigma^\alpha \cU.
\end{equation}
All differentials in~\eqref{eq:ogr-gr-spinor} are linear combinations of three types of maps:
\begin{align}
\label{eq:df1}
\bS_{|\alpha|} \otimes \Sigma^\alpha \cU \xrightarrow{\ \vartheta_1\ }
\bS_{|\alpha|} \otimes \DV \otimes \Sigma^\beta \cU \xrightarrow{\ \varkappa_1\ }
\bS_{|\beta|} \otimes \Sigma^\beta \cU, 
&& \text{if~$\beta \subset \alpha$ and~$|\alpha| - |\beta| = 1$,}
\\
\label{eq:df2}
\bS_{|\alpha|} \otimes \Sigma^\alpha \cU \xrightarrow{\ \vartheta_2\ }
\bS_{|\alpha|} \otimes \bw2\DV \otimes \Sigma^\beta \cU \xrightarrow{\ \varkappa_2\ }
\bS_{|\beta|} \otimes \Sigma^\beta \cU, 
&& \text{if~$\beta \subset \alpha$ and~$|\alpha| - |\beta| = 2$,}
\\
\label{eq:df3}
\bS_{|\alpha|} \otimes \Sigma^\alpha \cU \xrightarrow{\ \vartheta_2^+\ }
\bS_{|\alpha|} \otimes \Sym^2\DV \otimes \Sigma^\beta \cU \xrightarrow{\ \bq_W\ }
\bS_{|\beta|} \otimes \Sigma^\beta \cU, 
&& \text{if~$\beta \subset \alpha$ and~$|\alpha| - |\beta| = 2$.}
\end{align}
\end{theorem}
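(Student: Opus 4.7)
The plan is to construct the resolution~\eqref{eq:ogr-gr-spinor} by combining a Koszul-type complex with a spinor twist and to pin down the differentials by $\Spin(\DV)$-equivariance; since the statement is due to~\cite{FK18}, I sketch the approach one naturally takes.

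I would start from the observation that the (disconnected) Lagrangian Grassmannian~$\OGr(n,\DV) \subset \Gr(n,\DV)$ is cut out scheme-theoretically by the section $\sigma \in \rH^0(\Gr(n,\DV),\Sym^2\cU^\vee)$ induced by~$\bq_W$, so that the ordinary Koszul complex $\wedge^\bullet\Sym^2\cU$ resolves~$\iota_*\cO_{\OGr(n,\DV)}$. Using Weyl's classical plethysm decomposition of $\wedge^r\Sym^2\cU$ into Schur functors, this already yields a Schur-functor description of each term, indexed by Young diagrams with $|\gamma|=2r$ of a specific shape class.

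The next step is to separate the component~$\OGr_+(n,\DV)$ and to pass from the structure sheaf to the spinor line bundle~$\cS$. For this I would consider the ``spin Koszul'' complex obtained by tensoring the Koszul complex with a spinor module $\bS_0$ and twisting the differentials using the Clifford action of $\cU \subset \DV \subset \rcB_1(\DV)$ on~$\bS_\bullet$. The filtrations $\bS_0 \cong \bigoplus \bw{2i}V \otimes (\det V)^{-1/2}$ and $\bS_1 \cong \bigoplus \bw{2i+1}V \otimes (\det V)^{-1/2}$ from~\eqref{eq:bs01} absorb half of the plethysm multiplicities, producing a complex whose terms are indexed by \emph{symmetric} Young diagrams $\alpha$ with grading $i = \tfrac{|\alpha|+\ell_{\diag}(\alpha)}{2}$, exactly matching~\eqref{eq:cfi-gr}. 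Equivariance then forces the differentials to be linear combinations of the three canonical maps~\eqref{eq:df1}--\eqref{eq:df3}: for $\beta \subset \alpha$ with $|\alpha|-|\beta| \in \{1,2\}$ these span the space of $\Spin(\DV)$-equivariant morphisms between the relevant summands.

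The main obstacle will be exactness together with the determination of the precise scalars in the differential. The cleanest route is the Kempf--Weyman geometric technique applied to a suitable resolution $\pi \colon Y \to \Gr(n,\DV)$ built from an incidence variety between $\OGr_+(n,\DV)$ and the spinor variety: one computes $\rR\pi_*$ of a tautological line bundle on~$Y$ term by term via Borel--Bott--Weil, and the surviving summands turn out to be precisely $\bS_{|\alpha|} \otimes \Sigma^\alpha\cU$ for $\alpha \in \SYD_n$, with the Koszul differential descending to the combination of~\eqref{eq:df1}--\eqref{eq:df3}. Alternatively, one can verify exactness in the affine chart $\{U \oplus V^\vee = \DV\}$, where the Lagrangian condition linearizes and the complex degenerates into a tensor product of standard Koszul resolutions; the subtle point is matching the diagonal-length invariant $\ell_{\diag}(\alpha)$ with the spin-grading that appears naturally in this chart, which is the technical heart of~\cite{FK18}.
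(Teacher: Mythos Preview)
Your route diverges substantially from the paper's. The paper does \emph{not} start from the Koszul complex of the section of~$\Sym^2\cU^\vee$ cutting out~$\OGr(n,\DV)$; instead it applies the Beilinson resolution of the diagonal on~$\Gr(n,\DV)$ directly to the sheaf~$j_*\cS$. Tensoring the diagonal resolution with~$p_2^*j_*\cS$ and pushing forward along~$p_1$ yields a spectral sequence whose~$\bE_1$-page consists of~$\rH^q(\OGr_+(n,\DV),\Sigma^{\alpha^T}j^*\cU\otimes\cS)\otimes\Sigma^\alpha\cU$, and a single Borel--Bott--Weil computation (this is the key Lemma~3.1 of~\cite{FK18}) shows that this cohomology is~$\bS_{|\alpha|}$ concentrated in degree~$(|\alpha|-\ell_{\diag}(\alpha))/2$ when~$\alpha\in\SYD_n$ and vanishes otherwise. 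The symmetric-diagram indexing and the grading~$(|\alpha|+\ell_{\diag}(\alpha))/2$ thus fall out of one clean cohomology calculation, not from any plethysm manipulation.

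Your main proposal has a genuine gap at the step ``tensor the Koszul complex with~$\bS_0$ and twist by the Clifford action.'' The Koszul complex~$\wedge^\bullet\Sym^2\cU$ resolves~$\iota_*\cO_{\OGr(n,\DV)}$, not~$j_*\cS$, and tensoring by the constant vector space~$\bS_0$ only multiplies by~$2^{n-1}$. To reach~$\cS$ one would need an actual complex with a Clifford-type differential whose cohomology is the spinor bundle, and you have not written one down; the phrase ``absorb half of the plethysm multiplicities'' is where the argument would have to do real work, and it does not. Your Kempf--Weyman alternative is closer in spirit (both it and the paper's method ultimately reduce to a BBW computation), but as stated it is only a pointer. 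For the differentials, the paper argues exactly as you suggest: compute~$\Hom(\Sigma^\alpha\cU,\Sigma^\beta\cU)\cong\Sigma^{\alpha/\beta}\DV$ by BBW, note that for consecutive symmetric~$\beta\subset\alpha$ one has~$|\alpha/\beta|\in\{1,2\}$ with the two boxes in distinct rows and columns, and observe that the only $\Spin(\DV)$-invariant maps~$\bS_k\otimes\DV^{\otimes k}\to\bS_{k'}$ are those built from~$\varkappa_1$, $\varkappa_2$, and~$\bq_W$.
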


\begin{proof}
The first claim is proved in~\cite[Proposition~3.3]{FK18};
for convenience we recall the argument.

Consider the standard full exceptional collection~$\{ \Sigma^\alpha\cU\}_{\alpha \in \YD_n}$ on~$\Gr(n,\DV)$,
the corresponding resolution of the diagonal
\begin{equation}
\label{eq:diagonal-gr}
\Big\{ 
\bw{n^2}(\cU \boxtimes \cU^\perp) \to 
\dots \to
\bw2(\cU \boxtimes \cU^\perp) \to 
\cU \boxtimes \cU^\perp \to
\cO \boxtimes \cO 
\Big\} \cong 
\Delta_*\cO
\end{equation}
on~$\Gr(n,\DV) \times \Gr(n,\DV)$,
and the Cauchy formula
\begin{equation}
\label{eq:cauchy}
\bw{i}(\cU \boxtimes \cU^\perp) = \bigoplus_{|\alpha| = i} \Sigma^\alpha\cU \boxtimes \Sigma^{\alpha^T}\cU^\perp.
\end{equation}
Let~$p_1$ and~$p_2$ denote the projections of~$\Gr(n,\DV) \times \Gr(n,\DV)$ to the factors.
Tensoring~\eqref{eq:diagonal-gr} with~$p_2^*j_*\cS$, and pushing it forward along~$p_1$, 
we obtain the hypercohomology spectral sequence
\begin{equation*}
\bE_1^{p,q} = 
\bigoplus_{|\alpha| = -p}
\rH^q(\Gr(n,\DV), \Sigma^{\alpha^T}\cU^\perp \otimes j_*\cS) \otimes \Sigma^\alpha \cU \Rightarrow
j_*\cS
\end{equation*}
On the other hand, using the projection formula and isomorphism~$j^*\cU^\perp \cong j^*\cU$, we obtain isomorphisms
\begin{equation*}
\rH^\bullet(\Gr(n,\DV), \Sigma^{\alpha^T}\cU^\perp \otimes j_*\cS) \cong
\rH^\bullet(\OGr(n,\DV), \Sigma^{\alpha^T}j^*\cU^\perp \otimes \cS) \cong
\rH^\bullet(\OGr(n,\DV), \Sigma^{\alpha^T}j^*\cU \otimes \cS).
\end{equation*}
Finally, the argument of~\cite[Lemma~3.1]{FK18} proves that
\begin{equation}
\label{eq:rh-ogr-cu-cs}
\rH^q(\OGr(n,\DV), \Sigma^{\alpha^T}j^*\cU \otimes \cS) =
\begin{cases}
\bS_{|\alpha|}, & \text{if~$\alpha \in \SYD_n$ and~$q = (|\alpha| - \ell_\diag(\alpha))/2$,}\\
0, & \text{otherwise,}
\end{cases}
\end{equation} 
and it follows that the spectral sequence becomes the exact sequence~\eqref{eq:ogr-gr-spinor}.

Since the embedding~$j$, the projections~$p_i$, the resolution of the diagonal~\eqref{eq:diagonal-gr},
and the vector bundle~$\cS$ are all $\Spin(\DV)$-equivariant,
so is the obtained exact sequence~\eqref{eq:ogr-gr-spinor}.

It remains to show that the differentials are linear combinations of the maps~\eqref{eq:df1}, \eqref{eq:df2}, and~\eqref{eq:df3}.
On the one hand, note that Borel--Bott--Weil theorem gives an isomorphism of~$\Spin(\DV)$-representations
\begin{equation*}
\Hom(\Sigma^\alpha\cU, \Sigma^\beta\cU) \cong
\begin{cases}
\Sigma^{\alpha/\beta}(\DV), & \text{if~$\beta \subset \alpha$},\\
0, & \text{if~$\beta \not\subset \alpha$},
\end{cases}
\end{equation*}
where~$\Sigma^{\alpha/\beta}$ is the Schur functor associated with the skew-diagram~$\alpha/\beta$ (see~\cite[\S2.1]{W}).
Furthermore, if~$\beta \subset \alpha$ and~$|\alpha| + \ell_{\diag}(\alpha) = |\beta| + \ell_{\diag}(\beta) + 2$
then either
\begin{itemize}
\item
$|\alpha| = |\beta| + 1$ and~$\ell_{\diag}(\alpha) = \ell_{\diag}(\beta) + 1$, or
\item
$|\alpha| = |\beta| + 2$ and~$\ell_{\diag}(\alpha) = \ell_{\diag}(\beta)$.
\end{itemize}

On the other hand, if~$|\alpha/\beta| = 1$ then~$\Sigma^{\alpha/\beta}(\DV) = \DV$,
and if~$|\alpha/\beta| = 2$ and the two boxes of~$\alpha/\beta$ occupy different rows and columns 
(which is the case if both~$\alpha$ and~$\beta$ are symmetric)
then~$\Sigma^{\alpha/\beta}(\DV) = \DV \otimes \DV$.
It remains to note that the canonical maps~$\Sigma^\alpha\cU \to W^{\otimes k} \otimes \Sigma^\beta\cU$
are the sums of maps~$\vartheta_1$, $\vartheta_2$, and~$\vartheta_2^+$,
and the only~$\Spin(\DV)$-invariant maps~$\bS_k \otimes \DV \to \bS_{k+1}$
and~$\bS_k \otimes \DV \otimes \DV \to \bS_{k+2} = \bS_k$
are induced by the maps~$\varkappa_1$, $\varkappa_2$, and~$\bq_W$, respectively.
\end{proof}

It would be interesting to understand the differentials of~$\cF_\bullet$ better (cf.~Remark~\ref{rem:ck-differentials}).

\subsection{Proof of Theorem~\ref{thm:clifford-koszul}}
\label{ss:proof-ck}

Consider the diagram
\begin{equation}
\label{eq:diagram-pi-gamma}
\vcenter{\xymatrix{
& \Sym^2V^\vee \setminus \{0\} \ar[dl]_\pi \ar[dr]^\gamma
\\
\P(\Sym^2V^\vee) &&
\Gr(n,\DV),
}}
\end{equation}
where~$\pi$ is the natural projection and the morphism~$\gamma$ is the restriction of the embedding
\begin{equation*}
\gamma \colon \Sym^2V^\vee \hookrightarrow \Gr(n,\DV),
\qquad
q \mapsto
\gamma(q) \coloneqq \Ima\left(V \xrightarrow{\ (\id,q)\ } V \oplus V^\vee = \DV\right)
\end{equation*}
that takes a quadratic form~$q \in \Sym^2V^\vee$ to its graph.
We will show that the pullback of the complex~\eqref{eq:ogr-gr-spinor} along~$\gamma$ is exact,
that it descends along~$\pi$ to an exact sequence on~$\P(\Sym^2V^\vee)$,
and the resulting exact sequence coincides with~\eqref{eq:clifford-koszul}.
We consider the obvious action of the group~$\GL(V)$ on~\eqref{eq:diagram-pi-gamma};
using the epimorphism~$\tGL(V) \to \GL(V)$ (see~\eqref{eq:tgl})
we also obtain an action of~$\tGL(V)$.

\begin{lemma}
\label{lem:diagram-equivariant}
The diagram~\eqref{eq:diagram-pi-gamma} is $\tGL(V)$-equivariant.
\end{lemma}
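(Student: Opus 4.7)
The plan is to reduce $\tGL(V)$-equivariance of the diagram to $\GL(V)$-equivariance via the epimorphism $\tGL(V)\to\GL(V)$ of~\eqref{eq:tgl}, and then to verify that each of the two morphisms~$\pi$ and~$\gamma$ is $\GL(V)$-equivariant. Since both~$\Sym^2V^\vee\setminus\{0\}$ and~$\P(\Sym^2V^\vee)$ carry the natural $\GL(V)$-actions (pulled back to $\tGL(V)$-actions along the epimorphism) and the projection~$\pi$ is obtained by quotienting by the scaling action of~$\Gm$, the equivariance of~$\pi$ is immediate.

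For~$\gamma$, first I would spell out the two actions. The $\tGL(V)$-action on~$\Gr(n,\DV)$ comes from the chain of inclusions $\tGL(V)\hookrightarrow\Spin(\DV)\to\SO(\DV)\hookrightarrow\GL(\DV)$, and hence factors through the natural $\GL(V)$-action on~$\DV=V\oplus V^\vee$ given by $g\cdot(v,f)=(gv,f\circ g^{-1})$ (which preserves~$\bq_\DV$ by construction). The action on~$\Sym^2V^\vee$ is the usual one: viewing $q\in\Sym^2V^\vee$ as a symmetric map $q\colon V\to V^\vee$, one has $g\cdot q=(g^{-1})^{T}\circ q\circ g^{-1}$.

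The core computation is then to check that the graph of $g\cdot q$ equals the image of the graph of~$q$ under the action of~$g$ on~$\DV$. Concretely, for $v\in V$ one has
\begin{equation*}
g\cdot(v,q(v))=\bigl(gv,\,(g^{-1})^{T}q(v)\bigr)=\bigl(gv,\,(g\cdot q)(gv)\bigr),
\end{equation*}
so substituting $v'=gv$ identifies this element with $(v',(g\cdot q)(v'))$, which is a typical element of~$\gamma(g\cdot q)$. Hence $g\cdot\gamma(q)=\gamma(g\cdot q)$, proving $\GL(V)$-equivariance of~$\gamma$, which lifts to $\tGL(V)$-equivariance through the epimorphism $\tGL(V)\to\GL(V)$.

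There is no real obstacle here; the only point that requires minor care is that the $\GL(V)$-action on~$\DV$ used in~\eqref{eq:tgl} is precisely the one that makes the tautological inclusions $V,V^\vee\hookrightarrow\DV$ respectively tautological and the dual, so that the formula $\gamma(q)=\mathrm{Im}(\id,q)$ transforms covariantly under~$g$.
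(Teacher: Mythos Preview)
Your proof is correct and follows essentially the same route as the paper: reduce to $\GL(V)$-equivariance via the epimorphism $\tGL(V)\to\GL(V)$, note that $\pi$ is obviously equivariant, and verify equivariance of~$\gamma$ by a direct graph computation using the action $g\cdot(v,f)=(gv,f\circ g^{-1})$ on~$\DV$. Your identity $g\cdot\gamma(q)=\gamma(g\cdot q)$ is in fact the correct outcome of this computation (the paper's displayed conclusion $\gamma(g^{-1}q)$ appears to be a slip, but the argument and statement are the same).
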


\begin{proof}
The morphism~$\pi$ is obviously~$\GL(V)$-equivariant.
To show that~$\gamma$ is also~$\GL(V)$-equivariant, 
recall that the action of~$\GL(V)$ on~$\DV = V \oplus V^\vee$ is given by~$g(v,f) = (gv, fg^{-1})$.
Therefore,
\begin{equation*}
g\cdot \gamma(q) = 
g \cdot \{(v, q(v)\} =
\{ (gv, q(v)g^{-1}) \} =
\{ (v, q(g^{-1}v)g^{-1}) \} =
\gamma(g^{-1}q).
\end{equation*}
It remains to use the epimorphism~$\tGL(V) \to \GL(V)$ to induce a $\tGL(V)$-structure.
\end{proof}

\begin{remark}
It may look strange that we use the $\tGL(V)$-equivariant structure instead of the $\GL(V)$-equivariant one,
but this will become very useful later when we will consider vector bundles on~$\Gr(n,\DV)$
which are $\Spin(\DV)$-equivariant and not~$\SO(\DV)$-equivariant.
\end{remark}

Note that the subspace~$\gamma(q) \subset \DV$ is canonically isomorphic to~$V$ (via the projection~$\DV \to V$).

\begin{lemma}
\label{lem:q-upsilon}
Under the canonical isomorphism~$V \cong \gamma(q)$ we have the equality 
\begin{equation}
\label{eq:q-upsilon}
\bq_W\vert_{\gamma(q)} = 2q.
\end{equation}
In particular, $\gamma(\Sym^2V^\vee \setminus \{0\}) \subset \Gr(n,\DV) \setminus \OGr(n,\DV)$.
\end{lemma}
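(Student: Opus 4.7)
The proof is essentially a direct unwinding of definitions. The plan is to pick a vector $v \in V$, follow it through the canonical isomorphism $V \xrightarrow{\ \sim\ } \gamma(q)$, and then evaluate $\bq_W$ on the result.

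More concretely, view $q \in \Sym^2V^\vee$ simultaneously as a quadratic form on $V$ and as its polarisation, i.e., a symmetric bilinear form $q(-,-)\colon V \times V \to \kk$ with $q(v,v) = q(v)$, or equivalently as the linear map $\hat q\colon V \to V^\vee$ sending $v$ to the linear form $w \mapsto q(v,w)$. Then by construction $\gamma(q)$ is the image of the map $v \mapsto (v, \hat q(v))$, so under the canonical identification $V \cong \gamma(q)$ (given by the projection $\DV \to V$), a vector $v \in V$ corresponds to $(v, \hat q(v)) \in \DV$. Using the definition~\eqref{eq:w-form} of $\bq_W$, I compute
\begin{equation*}
\bq_W\bigl((v_1, \hat q(v_1)),\ (v_2, \hat q(v_2))\bigr)
= \hat q(v_1)(v_2) + \hat q(v_2)(v_1)
= q(v_1,v_2) + q(v_2,v_1)
= 2\, q(v_1,v_2),
\end{equation*}
the last equality by symmetry of $q$. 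Setting $v_1 = v_2 = v$ gives the desired identity $\bq_W|_{\gamma(q)}(v) = 2 q(v,v) = 2q(v)$.

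For the second claim, suppose $q \in \Sym^2V^\vee \setminus \{0\}$. Since $\mathrm{char}(\kk) = 0$, the quadratic form $2q$ on $V$ is also nonzero, hence $\bq_W|_{\gamma(q)} \ne 0$ by \eqref{eq:q-upsilon}. Thus $\gamma(q) \subset \DV$ is not totally isotropic for $\bq_W$, so $\gamma(q)$ does not define a point of the orthogonal Grassmannian $\OGr(n,\DV) \subset \Gr(n,\DV)$, which proves $\gamma(\Sym^2V^\vee \setminus \{0\}) \subset \Gr(n,\DV) \setminus \OGr(n,\DV)$. There is no serious obstacle here; the only mild subtlety is keeping the identifications between a quadratic form, its polarisation, and the associated map $V \to V^\vee$ straight, so that the factor of $2$ in~\eqref{eq:q-upsilon} appears correctly and is not absorbed into a convention.
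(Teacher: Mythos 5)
Your proof is correct and is essentially the same as the paper's: both compute $\bq_W\bigl((v_1,\hat q(v_1)),(v_2,\hat q(v_2))\bigr) = 2q(v_1,v_2)$ directly from the definition~\eqref{eq:w-form}, and the paper leaves the second claim as an easy consequence exactly as you spell it out.
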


\begin{proof}
The definition~\eqref{eq:w-form} of the form~$\bq_W$ implies that for any~$v_1,v_2 \in V$ we have
\begin{equation*}
\bq_W((v_1,q(v_1)), (v_2,q(v_2))) = q(v_2,v_1) + q(v_1,v_2) = 2q(v_1,v_2),
\end{equation*}
which proves~\eqref{eq:q-upsilon}.
The second claim also follows easily.
\end{proof}

Now we are ready to prove the theorem.

\begin{proof}[Proof of Theorem~\textup{\ref{thm:clifford-koszul}}]
An immediate consequence of Lemma~\ref{lem:q-upsilon} is that the pullback of~\eqref{eq:ogr-gr-spinor}
\begin{equation*}
\gamma^*\cF_\bullet \coloneqq 
\Big\{ \gamma^*\cF_{n(n+1)/2} \to \gamma^*\cF_{n(n+1)/2-1} \to \dots \to \gamma^*\cF_1 \to \gamma^*\cF_0 \Big\}
\end{equation*}
on the punctured affine space~$\Sym^2V^\vee \setminus \{0\}$ is exact.
Since~\eqref{eq:ogr-gr-spinor} is $\Spin(\DV)$-equivariant,
the inclusion~$\tGL(V) \hookrightarrow \Spin(\DV)$ (see~\eqref{eq:tgl})
induces on~$\gamma^*\cF_\bullet$ a $\tGL(V)$-equivariant structure.
Below, we compute the terms of~$\gamma^*\cF_\bullet$;
in other words, we describe~$\gamma^*\left(\bS_{|\alpha|} \otimes \Sigma^\alpha \cU \right)$
as~$\tGL(V)$-equivariant vector bundles.

On the one hand, we observe an isomorphism~$\gamma^*\cU \cong V \otimes \cO$, which follows from the definition of~$\gamma$, 
and which is~$\GL(V)$-equivariant, and hence~$\tGL(V)$-equivariant as well.
Applying to it the Schur functor~$\Sigma^\alpha$, 
we obtain a $\tGL(V)$-equivariant isomorphism
\begin{equation}
\label{eq:gamma-pb-cu}
\gamma^*(\Sigma^\alpha \cU) \cong \Sigma^\alpha V \otimes \cO.
\end{equation} 
On the other hand, $\bS_{|\alpha|}$ is a representation of~$\Spin(\DV)$, whose restriction to~$\tGL(V)$ is given by~\eqref{eq:bs01}.
Therefore, taking~\eqref{eq:gamma-pb-cu} into account we obtain an isomorphism
\begin{equation*}
\gamma^*\left(\bS_{|\alpha|} \otimes \Sigma^\alpha \cU \right) \cong
\left(\bigoplus_{i=0}^n \bw{2i+\epsilon(\alpha)}V\right) \otimes (\det V)^{-\frac12} \otimes \Sigma^\alpha V \otimes \cO,
\end{equation*}
where we write~$\epsilon(\alpha) \in \{0,1\}$ for the parity of~$|\alpha|$.
Note that the subgroup~$\cZ_{\Spin} \subset \tGL(V)$ defined in~\eqref{eq:cz} acts trivially on~$\Sym^2V^\vee$,
hence it acts on the complex~$\gamma^*\cF_\bullet$ fiberwise.
Moreover, the above formula shows that the action of~$\cZ_{\Spin}$ on the complex
\begin{equation*}
\gamma^*\cF_\bullet \otimes (\det V)^{\frac12}
\end{equation*}
is trivial, hence this complex 
has a $(\tGL(V)/\cZ_{\Spin})$-equivariant structure.

Now recall the subgroup~$\cZ_{\tGL} \subset \tGL(V)$ defined in~\eqref{eq:cz-tgl}
and note that~$\cZ_{\Spin} \subset \cZ_{\tGL}$ and~$\cZ_{\tGL}/\cZ_{\Spin} \cong \Gm$ acts on~$\Sym^2V^\vee$ by dilations;
therefore, there is an exact equivalence between the categories of
\begin{itemize}
\item 
$(\tGL(V)/\cZ_{\Spin})$-equivariant sheaves on~$\Sym^2V^\vee \setminus \{0\}$, and
\item 
$(\tGL(V)/\cZ_{\tGL})$-equivariant sheaves on~$\P(\Sym^2V^\vee)$.
\end{itemize}
Since~$\tGL(V)/\cZ_{\tGL} \cong \PGL(V)$, we deduce the existence 
of an exact $\PGL(V)$-equivariant sequence of vector bundles~$\cG_\bullet$ on~$\P(\Sym^2V^\vee)$ 
such that~$\pi^*\cG \cong \gamma^*\cF_\bullet \otimes (\det V)^{\frac12}$.

Now we check that the terms of~$\cG_\bullet$ are given by~\eqref{eq:ck-cgi}. 
For this just note that~$\cZ_{\tGL}/\cZ_{\Spin} \cong \Gm$ 
acts on the space~$\bw{2i+\epsilon(\alpha)}V \otimes \Sigma^\alpha V$ 
with weight~$(2i + \epsilon(\alpha) + |\alpha|)/2 = i + \lceil |\alpha|/2 \rceil$,
which means that the sheaf~$\gamma^*\left(\bS_{|\alpha|} \otimes \Sigma^\alpha \cU \right) \otimes (\det V)^{\frac12}$  
descends to
\begin{equation*}
\bigoplus_{i=0}^n \bw{2i+\epsilon(\alpha)}V \otimes \Sigma^\alpha V \otimes 
\cO\left(-i - \left\lceil \tfrac{|\alpha|}2 \right\rceil\right) \cong
\Sigma^\alpha V \otimes \cB_{-|\alpha|}
\end{equation*}
(where the second isomorphism follows from~\eqref{eq:cliff-i-again}), as required.

It remains to identify the differentials in~$\cG_\bullet$;
in particular (this is the crucial point!), to show that they are morphisms of~$\cB_0$-modules.
For this we use the description of the differentials in~$\cF_\bullet$ provided by Theorem~\ref{thm:fk}.
More precisely, we will show that the pullbacks along~$\gamma$ of the maps~\eqref{eq:df1}, \eqref{eq:df2}, and~\eqref{eq:df3}
twisted by~$(\det V)^{\frac12}$ coincide with the pullbacks along~$\pi$ of the maps~\eqref{eq:dg1}, \eqref{eq:dg2}, and~\eqref{eq:dg3};
since~$\pi^*$ is an equivalence of the corresponding equivariant categories, 
this will provide the required identification.
Finally, since the maps~\eqref{eq:dg1}, \eqref{eq:dg2}, and~\eqref{eq:dg3} are morphisms of~$\cB_0$-modules,
it will prove that~\eqref{eq:clifford-koszul} is a complex of $\cB_0$-modules and complete the proof of the theorem.

So, we fix a pair~$\alpha$, $\beta$ of symmetric Young diagrams 
such that~$\beta \subset \alpha$ and~$|\alpha| - |\beta| = k \in \{1,2\}$
and a quadratic form~$q \in \Sym^2V^\vee \setminus \{0\}$.
Using the isomorphism~\eqref{eq:gamma-pb-cu}
we see that the pullbacks along~$\gamma$ of the maps~\eqref{eq:df1} and~\eqref{eq:df2} coincide with the compositions
\begin{equation*}
\Sigma^\alpha V \otimes \bS_{|\alpha|} \xrightarrow{\ \theta_k\ }
\Sigma^\beta V \otimes \bw{k} V \otimes \bS_{|\alpha|} \xrightarrow{\ \wedge^k(\id,q)\ }
\Sigma^\beta V \otimes \bw{k} \DV \otimes \bS_{|\alpha|} \xrightarrow{\ \varkappa_k\ }
\Sigma^\beta V \otimes \bS_{|\beta|}
\end{equation*}
Similarly, the pullbacks along~$\pi$ of the maps~\eqref{eq:dg1} and~\eqref{eq:dg2} coincide with the maps
\begin{equation*}
\Sigma^\alpha V \otimes \pi^*\cB_{-|\alpha|} \xrightarrow{\ \theta_k\ }
\Sigma^\beta V \otimes \bw{k} V \otimes \pi^*\cB_{-|\alpha|} \xrightarrow{\ \kappa_k\ }
\Sigma^\beta V \otimes \pi^*\cB_{-|\beta|}.
\end{equation*}
Using~\eqref{eq:cliff-i-again} as before to identify~$\gamma^*\bS_{|\alpha|} \otimes (\det V)^{\frac12}$ with~$\pi^*\cB_{-|\alpha|}$
and~$\gamma^*\bS_{|\beta|} \otimes (\det V)^{\frac12}$ with~$\pi^*\cB_{-|\beta|}$
we see that to prove the required identification of~\eqref{eq:dg1} and~\eqref{eq:df1}, it is enough to show that the composition
\begin{equation*}
\bw{\bullet}V \otimes V \xrightarrow{\ \id \otimes (\id,q)\ }
\bw{\bullet}V \otimes (V \oplus V^\vee) \xrightarrow{\qquad\quad\ }
\bw{\bullet}V,
\end{equation*}
where the second map is given by wedge product with~$V$ and convolution with~$V^\vee$,
coincides with the Clifford multiplication in~$\cB(V,q)$.
But this is clear, because this composition acts as
\begin{equation*}
(v_1 \wedge \dots \wedge v_s) \otimes v \mapsto
(v_1 \wedge \dots \wedge v_s) \otimes (v,q(v)) \mapsto
v_1 \wedge \dots \wedge v_s \wedge v +
\sum_{i=1}^s (-1)^{i-s} q(v,v_i) v_1 \wedge \dots \wedge \widehat{v_i} \wedge \dots \wedge v_s,
\end{equation*}
which is exactly the Clifford multiplication (compare with~\eqref{eq:clifford-multiplication}).

The identification of~\eqref{eq:dg2} and~\eqref{eq:df2} is proved analogously
(we can simplify the verification a little bit by using the associativity of the Clifford algebra).

Finally, to identify~\eqref{eq:dg3} and~\eqref{eq:df3}
we note that the pullback along~$\gamma$ of~\eqref{eq:df3} is the composition
\begin{equation*}
\Sigma^\alpha V \otimes \bS_{|\alpha|} \xrightarrow{\ \theta_2^+\ }
\Sigma^\beta V \otimes \Sym^2 V \otimes \bS_{|\alpha|} \xrightarrow{\ \Sym^2(\id,q)\ }
\Sigma^\beta V \otimes \Sym^2 \DV \otimes \bS_{|\alpha|} \xrightarrow{\ \bq_W\ }
\Sigma^\beta V \otimes \bS_{|\beta|}
\end{equation*}
and the pullback along~$\pi$ of the map~\eqref{eq:dg3} is the composition
\begin{equation*}
\Sigma^\alpha V \otimes \pi^*\cB_{-|\alpha|} \xrightarrow{\ \theta_2^+\ }
\Sigma^\beta V \otimes \Sym^2 V \otimes \pi^*\cB_{-|\alpha|} \xrightarrow{\ \bq_W\ }
\Sigma^\beta V \otimes \pi^*\cB_{-|\beta|}.
\end{equation*}
Now, the required identification follows from~\eqref{eq:q-upsilon}.
\end{proof}

\section{Homological properties of Clifford spaces}

In this section we study homological properties of the coordinate algebras~$\bB_U$ of Clifford spaces.
In~\S\ref{ss:koszul} we show that the algebra~$\bB_U$ is Koszul if the Clifford space is minimal,
and in~\S\ref{ss:as-regularity} we show that~$\bB_U$ is Artin--Schelter regular for any Clifford space.

\subsection{The Koszul property}
\label{ss:koszul}

Let $U \subset \Sym^2(V^\vee)$ be a subspace with~$\dim(U) = \dim(V) = n$ satisfying~\eqref{eq:empty},
and let~$(\P(U),\cB_0(V))$ be the corresponding minimal Clifford space.
The results of Section~\ref{sec:cs} (Theorems~\ref{thm:clifford-helix} and~\ref{thm:clifford-algebra})
in this case give the full and strong exceptional collection
\begin{equation}
\label{eq:db-min-cs}
\Db(\P(U),\cB_0(V))) = \Big\langle \cB_{1-n}(V), \dots, \cB_{-1}(V), \cB_0(V) \Big\rangle,
\end{equation}
the 1-periodic strong helix~$\{ \cB_i(V) \}_{i \in \ZZ}$,
and the graded algebra~$\bB_U = \bigoplus_{i=0}^\infty \Hom_{\cB_0(V)}(\cB_0(V), \cB_i(V))$ such that
\begin{equation*}
\coh(\P(U),\cB_0(V))) \simeq \qgr(\bB_U).
\end{equation*}
In this subsection we show that~$(\P(U),\cB_0(V))$ is an example of noncommutative projective space.
We start with a simple but useful example.

\begin{example}
\label{ex:toric}
Let~$\rT \subset \GL(V)$ be a maximal torus and let
\begin{equation}
\label{eq:w-toric}
U_\rT \coloneqq \langle f_1^2, f_2^2, \dots, f_n^2 \rangle \subset \Sym^2(V^\vee),
\end{equation}
where $f_1,\dots,f_n$ is a $\rT$-invariant basis of~$V^\vee$.
Clearly, \eqref{eq:empty} holds for this subspace.
The corresponding algebra~$\bB_{U_\rT}$ is known as a {\sf skew polynomial algebra}.
\end{example}

Recall that the Hilbert series for a graded algebra~$\bA$ is the formal power series
\begin{equation*}
\bh_\bA(z) \coloneqq \sum_{i=0}^\infty \dim(\bA_i)z^i.
\end{equation*}

\begin{lemma}
\label{lem:helix-min-cs}
Let~$(\P(U),\cB_0(V))$ be a minimal Clifford space.
The Hilbert series of its coordinate algebra~$\bB_U$
equals the Hilbert series of the polynomial algebra in~$n$ variables, i.e.,
\begin{equation*}
\bh_{\bB_U}(z) = \frac1{(1-z)^n}.
\end{equation*}
In particular, the algebras~$\bB_U$ for all~$n$-dimensional subspaces~$U \subset \Sym^2(V^\vee)$ satisfying~\eqref{eq:empty}
form a flat deformation family.
\end{lemma}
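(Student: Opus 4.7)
The plan is to compute the Hilbert series of $\bB_U$ directly from the cohomology description of $\cB_i(V)$ provided by Lemma~\ref{lem:cohomology-clifford}; the flat deformation statement will then be immediate from the fact that $\dim \bB_{U,i}$ is independent of $U$.

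By Lemma~\ref{lem:clifford-twist} we have $\bB_{U,i} \cong \rH^0(\P(U), \cB_i(V))$. Since the minimal Clifford space has $k = \dim(U) = n$, the inequality $i \le n - 2k = -n$ fails for every $i \ge 0$, so Lemma~\ref{lem:cohomology-clifford} gives
\begin{equation*}
\bB_{U,i} \cong \bigoplus_{s \ge 0} \bw{i-2s}V \otimes \Sym^s U^\vee
\end{equation*}
with no higher cohomology contribution. The key observation is that this expression depends on $U$ only through a Schur functor applied to $U^\vee$, so its dimension depends only on $n = \dim V = \dim U$.

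To extract the Hilbert series I would interchange the two summations and substitute $j = i - 2s$. Using $\dim \bw{j}V = \binom{n}{j}$ and $\dim \Sym^s U^\vee = \binom{n+s-1}{s}$, this factors the generating function as
\begin{equation*}
\bh_{\bB_U}(z) = \sum_{s \ge 0} \binom{n+s-1}{s} z^{2s} \cdot \sum_{j=0}^{n} \binom{n}{j} z^j = \frac{(1+z)^n}{(1-z^2)^n} = \frac{1}{(1-z)^n},
\end{equation*}
where the last equality uses $(1-z^2)^n = (1-z)^n(1+z)^n$.

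For the flat deformation claim, as $U$ varies over the open subscheme of the Grassmannian $\Gr(n, \Sym^2 V^\vee)$ on which~\eqref{eq:empty} holds, the formula above identifies the graded pieces of $\bB_U$ with the fibres of an explicit sheaf on that locus (obtained from the displayed formula by replacing $U^\vee$ with the dual of the tautological subbundle); constancy of the fibre rank then forces this sheaf to be locally free, which is the desired flatness. No step is a serious obstacle: the only real input is the higher cohomology vanishing for $\cB_i(V)$ with $i \ge 0$, after which everything reduces to the binomial identity above.
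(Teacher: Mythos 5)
Your proposal is correct and takes essentially the same route as the paper: both identify $\bB_{U,i}\cong\rH^0(\P(U),\cB_i(V))\cong\bigoplus_{s\ge 0}\bw{i-2s}V\otimes\Sym^sU^\vee$ via \eqref{eq:ext-clifford} and Lemma~\ref{lem:cohomology-clifford}, and both deduce flatness from the constancy of the graded dimensions over the connected open locus of $\Gr(n,\Sym^2V^\vee)$ where \eqref{eq:empty} holds. The only cosmetic difference is that you get $\bh_{\bB_U}(z)=(1+z)^n/(1-z^2)^n=(1-z)^{-n}$ by a generating-function factorization, while the paper states the equivalent binomial identity \eqref{eq:binomial} and verifies it by a square-free/square monomial factorization.
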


\begin{proof}
By definition of the algebra~$\bB_U$, isomorphism~\eqref{eq:ext-clifford} and Lemma~\ref{lem:cohomology-clifford}, we have
\begin{equation}
\label{eq:algebra-ncp}
\bB_{U,i} =
\rH^0(\P(U), \cB_i(V))) \cong
\bw{i}V \oplus \Big(\bw{i-2}V \otimes U^\vee \Big) \oplus \Big(\bw{i-4}V \otimes \Sym^2(U^\vee)\Big) \oplus \dots.
\end{equation}
Using a simple combinatorial identity we compute~$\dim(\bB_{U,i})$ as
\begin{equation}
\label{eq:binomial}
\sum_{s \ge 0} \tbinom{n}{i-2s} \cdot \tbinom{n+s-1}{s} = \tbinom{n+i-1}{i} = \dim \Sym^i(\kk^n),
\end{equation}
and it follows that~$\bh_{\bB_U}(z) = (1 - z)^{-n}$.
Since the set of all subspaces~$U \subset \Sym^2V^\vee$ of dimension~$n$ satisfying the assumption~\eqref{eq:empty}
is an open subset of the Grassmannian~$\Gr(n, \Sym^2V^\vee)$ (in particular, it is connected),
we conclude that all these algebras form a flat deformation family.
\end{proof}

\begin{remark}
One way to prove the identity~\eqref{eq:binomial} is by observing
that the right-hand side is the number of monomials of degree~$i$ in variables~$x_1$, \dots, $x_n$,
and each monomial can be uniquely written as a product of a square-free monomial of degree~$i - 2s$
and a monomial of degree~$s$ in~$x_i^2$ (cf.\ the proof of Theorem~\ref{thm:intro-bb}).
\end{remark}

To prove that the algebras~$\bB_U$ have nice homological properties we use a notion from~\cite{BP93},
which we slightly reformulate for convenience.

\begin{definition}
\label{def:helix-geometric}
A {\sf geometric helix} in a triangulated category~$\cT$
is an infinite sequence~$\{\cE_i\}_{i \in \ZZ}$ of exceptional objects in~$\cT$
such that there is an integer~$n$ with the following properties:
\begin{alenumerate}
\item
\label{it:helix-fec}
for each~$i \in \ZZ$ the collection~$(\cE_i, \cE_{i+1}, \dots, \cE_{i+n-1})$
is a full exceptional collection in~$\cT$,
\item
\label{it:helix-serre}
for each~$i$ one has~$\Ext^\bullet(\cE_i,\cE_{i-n}) \cong \kk[1-n]$, and
\item
\label{it:helix-geometric}
$\Ext^p(\cE_i,\cE_j) = 0$ for all~$p \ne 0$ and~$i \le j$.
\end{alenumerate}
\end{definition}

\begin{remark}
\label{rem:helix}
In fact, hypotheses~\ref{it:helix-fec} and~\ref{it:helix-serre}
are equivalent to the definition of a helix in~\cite[\S1]{BP93},
while hypothesis~\ref{it:helix-geometric} is the defining property of geometric helices.
Note that the definition of a helix that was used in~\S\ref{sec:cs} and~\S\ref{sec:cms}
only included hypothesis~\ref{it:helix-fec},
while~\ref{it:helix-geometric} was taken as the definition of a strong helix.
\end{remark}

\begin{theorem}
\label{thm:bbu}
If~$(\P(U), \cB_0(V)))$ is a minimal Clifford space, the helix~$\{\cB_i(V))\}_{i \in \ZZ}$ is a geometric helix.
Moreover, the graded algebra~$\bB_U$ is Koszul and there is an exact sequence of~$\bB_U$-modules
\begin{equation}
\label{eq:mck}
0 \to
\bB_U(-n) \to
\bB_U(1-n)^{\oplus n} \to
\dots \to
\bB_U(-2)^{\oplus \binom{n}{2}} \to
\bB_U(-1)^{\oplus n} \to
\bB_U \to
\kk \to
0.
\end{equation}
\end{theorem}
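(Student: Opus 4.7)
The strategy is to verify that the strong helix~$\{\cB_i(V)\}_{i \in \ZZ}$ of Theorem~\ref{thm:clifford-helix}
satisfies the additional Serre-duality condition of Definition~\ref{def:helix-geometric},
and then invoke the Bondal--Polishchuk Koszulity theorem for geometric helices
to deduce both the Koszul property of~$\bB_U$ and the resolution~\eqref{eq:mck}.

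First I would verify the three hypotheses of Definition~\ref{def:helix-geometric}
for~$n = \dim U = \dim V$.
Parts~\ref{it:helix-fec} and~\ref{it:helix-geometric} are immediate from Theorem~\ref{thm:clifford-helix},
since the exceptional collection~\eqref{eq:db-min-cs} has length~$n$ and the helix is strong.
For part~\ref{it:helix-serre}, the $1$-periodicity reduces the task to the single computation
\[
\Ext^\bullet_{\cB_0(V)}(\cB_0(V),\cB_{-n}(V)) \;\cong\; \rH^\bullet(\P(U),\cB_{-n}(V))
\]
provided by Lemma~\ref{lem:clifford-twist}.
Applying Lemma~\ref{lem:cohomology-clifford} with~$k = n$ and~$i = -n$,
the inequality~$i \le n - 2k$ is saturated, while the~$p=0$ case does not apply,
so only the top cohomology survives and equals
\[
\bigoplus_{s \ge 0} \bw{n+2s}V \otimes \Sym^sU \;\cong\; \bw{n}V \;\cong\; \kk,
\]
since~$\bw{n+2s}V = 0$ for~$s \ge 1$.
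Thus~$\Ext^\bullet(\cB_0(V),\cB_{-n}(V)) \cong \kk[1-n]$, which is~\ref{it:helix-serre},
and~$\{\cB_i(V)\}$ is a geometric helix.

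Next I would invoke the Bondal--Polishchuk Koszulity theorem~\cite{BP93}:
the~$\ZZ$-algebra~$\bigoplus_{i \le j}\Hom(\cE_i,\cE_j)$ of any geometric helix is Koszul,
and its Koszul dual is Frobenius with Gorenstein parameter~$n$.
Because our helix is~$1$-periodic via the Clifford twist~$\cF \mapsto \cF(\tfrac12)$
(see Lemma~\ref{lem:clifford-twist}),
this~$\ZZ$-algebra is Morita-equivalent to the graded algebra~$\bB_U$ from Theorem~\ref{thm:clifford-algebra},
so both Koszulity and the Frobenius property of the Koszul dual~$\bB_U^!$ descend.
Koszulity then produces the minimal free resolution of~$\kk$ of the standard form
\[
\cdots \to \bB_U(-i) \otimes (\bB_{U,i}^!)^\vee \to \cdots \to \bB_U(-1) \otimes (\bB_{U,1}^!)^\vee \to \bB_U \to \kk \to 0.
\]
Combining the Hilbert series~$\bh_{\bB_U}(z) = (1-z)^{-n}$ of Lemma~\ref{lem:helix-min-cs}
with the Koszul duality identity~$\bh_{\bB_U}(z)\,\bh_{\bB_U^!}(-z) = 1$
yields~$\bh_{\bB_U^!}(z) = (1+z)^n$,
hence~$\dim \bB_{U,i}^! = \binom{n}{i}$ and~$\bB_{U,i}^! = 0$ for~$i > n$;
substituting these dimensions produces~\eqref{eq:mck}.

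The main obstacle I foresee is bookkeeping rather than conceptual:
one has to align the~$\ZZ$-algebra Koszulity formulated in~\cite{BP93}
with the graded Koszulity of~$\bB_U$ by using the~$1$-periodicity of the helix
(implemented through the Clifford twist autoequivalence)
and to keep track of internal grading shifts in the Koszul dual so as to obtain the precise form of~\eqref{eq:mck}.
Once these translations are in place, the theorem follows formally.
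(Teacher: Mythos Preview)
Your proposal is correct and follows essentially the same route as the paper: verify hypotheses~\ref{it:helix-fec} and~\ref{it:helix-geometric} via Theorem~\ref{thm:clifford-helix}, check~\ref{it:helix-serre} by the explicit cohomology computation from Lemmas~\ref{lem:clifford-twist} and~\ref{lem:cohomology-clifford}, then invoke~\cite[Theorem~4.2 and Proposition~4.1]{BP93} together with the Hilbert-series identity and Lemma~\ref{lem:helix-min-cs} to obtain~$\dim\bB^!_{U,i}=\binom{n}{i}$ and hence~\eqref{eq:mck}. The only difference is cosmetic: the paper applies the Bondal--Polishchuk result directly to the graded algebra of the $1$-periodic helix, whereas you pass through the $\ZZ$-algebra first and then descend via periodicity; both are equivalent bookkeeping.
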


\begin{proof}
First, we note that hypotheses~\ref{it:helix-fec} and~\ref{it:helix-geometric}
for the sequence~$\{\cB_i(V))\}_{i \in \ZZ}$ were verified in Theorem~\ref{thm:clifford-helix},
and hypothesis~\ref{it:helix-serre} follows immediately from isomorphism~\eqref{eq:ext-clifford} and Lemma~\ref{lem:cohomology-clifford}.
This proves that~$\{\cB_i(V))\}_{i \in \ZZ}$ is a geometric helix.

Note that Theorem~\ref{thm:clifford-helix} also proves that the helix is 1-periodic.
Therefore, by~\cite[Theorem~4.2 and Proposition~4.1]{BP93} the algebra~$\bB_U$ is Koszul,
its dual algebra~$\bB_U^!$ is Frobenius of index~$n$, and the simple $\bB_U$-module~$\kk$ has a free resolution
\begin{equation*}
0 \to \bB^!_{U,n} \otimes \bB_U(-n) \to
\bB^!_{U,n-1} \otimes \bB_U(1-n) \to
\dots \to
\bB^!_{U,2} \otimes \bB_U(-2) \to
\bB^!_{U,1} \otimes \bB_U(-1) \to
\bB_U \to \kk \to 0,
\end{equation*}
and it remains to compute the dimensions of the graded components~$\bB^!_{U,i}$ of the dual algebra.
For this we use the relation between the Hilbert series of the algebras~$\bB_U$ and~$\bB_U^!$
\begin{equation*}
\bh_{\bB_U}(z) \cdot \bh_{\bB^!_U}(-z) = 1,
\end{equation*}
see~\cite[Corollary~2.2.2]{PP05}, which implies that~$\bh_{\bB^!_U}(z) = (1 + z)^n$,
and therefore~$\dim(\bB^!_{U,i}) = \binom{n}{i}$.
We conclude that the resolution of the simple module takes the form~\eqref{eq:mck}.
\end{proof}

\begin{corollary}
\label{cor:bbu}
If~$(\P(U), \cB_0(V))$ is a minimal Clifford space then
\begin{equation*}
\bB_U \cong \bT(V) / \langle U^\perp \rangle,
\end{equation*}
where~$U^\perp \coloneqq \Ker(\Sym^2V \to U^\vee) \subset \Sym^2V \subset V \otimes V$
and~$\langle U^\perp \rangle$ is the two-sided ideal in the tensor algebra~$\bT(V)$
generated by~$U^\perp \subset V \otimes V$.
In other words, $\bB_U$ is the graded Clifford algebra.
\end{corollary}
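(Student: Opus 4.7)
The plan is to extract a quadratic presentation of $\bB_U$ from Theorem~\ref{thm:bbu} and then identify the space of quadratic relations explicitly with $U^\perp$ using the Clifford multiplication formula.

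First, I would read off the structure of the presentation from the Koszul resolution~\eqref{eq:mck}: $\Tor_1^{\bB_U}(\kk,\kk)$ is concentrated in internal degree~$1$ with dimension~$n$, and $\Tor_2^{\bB_U}(\kk,\kk)$ is concentrated in internal degree~$2$ with dimension~$\binom{n}{2}$. The first statement says $\bB_U$ is generated by $\bB_{U,1}$, the second that the relations are quadratic, so
\begin{equation*}
\bB_U \cong \bT(\bB_{U,1})/\langle R\rangle
\qquad\text{with}\qquad
R \coloneqq \Ker\bigl(\bB_{U,1}\otimes\bB_{U,1}\to\bB_{U,2}\bigr),
\qquad \dim R=\tbinom{n}{2}.
\end{equation*}

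Next, using~\eqref{eq:algebra-ncp} (or directly Lemma~\ref{lem:cohomology-clifford}) I identify the relevant graded components as $\bB_{U,1}\cong V$ and $\bB_{U,2}\cong \wedge^2V\oplus U^\vee$, and compute the multiplication map $V\otimes V\to\wedge^2V\oplus U^\vee$ via formula~\eqref{eq:clifford-multiplication} with $l=m=1$. Taking global sections, this map sends $v_1\otimes v_2\mapsto v_1\wedge v_2 + \bar q(v_1,v_2)$, where $\bar q\colon V\otimes V\twoheadrightarrow\Sym^2V\twoheadrightarrow U^\vee$ is dual to the inclusion $U\hookrightarrow\Sym^2V^\vee$. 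Under the decomposition $V\otimes V=\wedge^2V\oplus\Sym^2V$, the first summand maps isomorphically onto $\wedge^2V$, while the second surjects onto $U^\vee$ with kernel exactly $U^\perp$. Hence the kernel of the multiplication map equals $U^\perp\subset\Sym^2V\subset V\otimes V$.

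Finally, a dimension check $\dim U^\perp=\binom{n+1}{2}-n=\binom{n}{2}=\dim R$ gives $R=U^\perp$, yielding the desired presentation $\bB_U\cong\bT(V)/\langle U^\perp\rangle$. The only delicate point is to make sure that the abstract algebra multiplication on $\bigoplus_i\Hom_{\cB_0(V)}(\cB_0(V),\cB_i(V))$ --- induced by the helix and the tensor product isomorphism~\eqref{eq:tensor-bi-bj} --- really agrees with Clifford multiplication on global sections of the summands $\wedge^j V\otimes\cO\subset\cB_j(V)$; this is essentially a definitional unraveling, but it is where all the content of the identification sits.
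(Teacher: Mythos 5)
Your proposal is correct and follows essentially the same route as the paper: both use the Koszulness (equivalently, the resolution~\eqref{eq:mck}) from Theorem~\ref{thm:bbu} to reduce to a quadratic presentation, and then compute the degree-two multiplication map $V\otimes V\to \bw2V\oplus U^\vee$ via~\eqref{eq:clifford-multiplication} to identify the space of quadratic relations with $\Ker(\Sym^2V\to U^\vee)=U^\perp$. The extra dimension count is harmless but unnecessary, since the kernel computation already gives $R=U^\perp$ exactly.
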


\begin{proof}
The multiplication~$V \otimes V = \bB_{U,1} \otimes \bB_{U,1} \to \bB_{U,2} = \bw2V \oplus U^\vee$
is given by
\begin{equation*}
v_1 \otimes v_2 \mapsto v_1 \wedge v_2 + \bq_U(v_1 \cdot v_2),
\end{equation*}
where~$\bq_U \colon \Sym^2V \to U^\vee$ is the dual map to the embedding~$U \hookrightarrow \Sym^2V^\vee$.
The restriction of the multiplication to~$\bw2V \subset V \otimes V$ is an isomorphism onto~$\bw2V \subset \bB_{U,2}$,
and its restriction to~$\Sym^2V \subset V \otimes V$ is the map~$\bq_U$.
Therefore, the space of quadratic relations for~$\bB_U$ is~$\Ker(\bq_U) = U^\perp$,
and since~$\bB_U$ is Koszul, and in particular quadratic,
it follows that~$\bB_U \cong \bT(V) / \langle U^\perp \rangle$,
which coincides with the definition of the graded Clifford algebra.
\end{proof}

\begin{corollary}
\label{cor:bbu-generation}
If~$(\P(U), \cB_0(V))$ is a minimal Clifford space
then~$\bB_U$ is a flat deformation of the polynomial algebra~$\Sym^\bullet(V)$
and~$\bB_U^!$ is a flat deformation of the exterior algebra~$\bw\bullet V^\vee$.
\end{corollary}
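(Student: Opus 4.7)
The plan is to reduce to the toric case via Lemma~\ref{lem:helix-min-cs} and build an explicit flat family there. Since the family $\{\bB_U\}$ over the connected open locus of $\Gr(n, \Sym^2 V^\vee)$ satisfying~\eqref{eq:empty} is already flat by Lemma~\ref{lem:helix-min-cs}, it suffices to exhibit, for the toric example $U_\rT$ of Example~\ref{ex:toric}, flat deformations $\Sym^\bullet(V) \rightsquigarrow \bB_{U_\rT}$ and $\bw\bullet V^\vee \rightsquigarrow \bB_{U_\rT}^!$, and then concatenate with the Grassmannian family by transitivity of flat deformation.

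Using the basis $e_i \in V$ dual to the $\rT$-eigenbasis $f_i$ of $V^\vee$, Corollary~\ref{cor:bbu} together with a direct evaluation of the natural pairing between $\Sym^2 V$ and $\Sym^2 V^\vee$ gives $U_\rT^\perp = \langle e_i e_j : i \neq j\rangle \subset \Sym^2 V$, so that $\bB_{U_\rT}$ is presented by the anticommutation relations $e_i e_j + e_j e_i = 0$ for $i \neq j$. I then introduce the family
\begin{equation*}
R_t := \langle e_i \otimes e_j + t \cdot e_j \otimes e_i : i < j\rangle \subset V \otimes V,
\qquad
\bA_t := \bT(V)/\langle R_t\rangle,
\qquad t \in \AA^1,
\end{equation*}
which satisfies $\bA_{-1} = \Sym^\bullet(V)$ (since $R_{-1} = \bw2V$) and $\bA_1 = \bB_{U_\rT}$. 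A direct computation of $R_t^\perp \subset V^\vee \otimes V^\vee$ produces the dual family
\begin{equation*}
\bA_t^! = \bT(V^\vee)/\langle (f^k)^2,\ f^l f^k - t f^k f^l : k < l\rangle,
\end{equation*}
with $\bA_{-1}^! = \bw\bullet V^\vee$ and $\bA_1^! = \bB_{U_\rT}^!$.

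The main step is verifying flatness: $\bh_{\bA_t}(z) = (1-z)^{-n}$ and $\bh_{\bA_t^!}(z) = (1+z)^n$, uniformly in $t \in \AA^1$. For generic $t$ these are Manin's quantum (skew-)polynomial and quantum exterior algebras, where the identity is classical. Formally one invokes Bergman's Diamond Lemma with the rewriting $e_i e_j \rightsquigarrow -t \cdot e_j e_i$ for $i < j$ in $\bA_t$, observing that the only ambiguity $e_i e_j e_k$ (for $i < j < k$) resolves to $-t^3\, e_k e_j e_i$ by either order of reduction; hence descending monomials $e_n^{a_n}\cdots e_1^{a_1}$ form a $\kk[t]$-basis of the total algebra, and dually for $\bA_t^!$ with strictly ascending normal form. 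The main point of care is the specialization $t = 0$ in $\bA_t$, where the rewriting becomes $e_i e_j \rightsquigarrow 0$ for $i < j$; here one verifies by hand that descending monomials still span exactly a $\binom{n+d-1}{d}$-dimensional subspace in each degree $d$ (they are precisely the monomials avoiding any ascending adjacent pair), so the Hilbert series does not jump. Concatenating the families $\{\bA_t\}$ and $\{\bA_t^!\}$ with the Grassmannian family of Lemma~\ref{lem:helix-min-cs} and its Koszul dual yields the corollary.
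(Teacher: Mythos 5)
Your proposal is correct and follows essentially the same route as the paper: reduce to the toric space $U_\rT$ via the flatness statement of Lemma~\ref{lem:helix-min-cs}, then connect $\bB_{U_\rT}$ to $\Sym^\bullet(V)$ (and $\bB_{U_\rT}^!$ to $\bw\bullet V^\vee$) by the one-parameter quantum (skew-)polynomial family and its quadratic dual, checking that the Hilbert series is constant. Your Diamond Lemma/normal-form verification (including the $t=0$ fiber) is just a slightly more explicit version of the paper's monomial-basis argument, so the two proofs coincide in substance.
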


\begin{proof}
By Lemma~\ref{lem:helix-min-cs} all algebras~$\bB_U$ are deformation equivalent,
hence it is enough to check that the skew polynomial algebra algebra~$\bB_{U_\rT}$
(where~$U_\rT$ is defined in~\eqref{eq:w-toric})
is a flat deformation of~$\Sym^\bullet(V)$.
To this end, we choose a basis~$\{v_1,v_2,\dots,v_n\}$ in~$V$
and consider the family of quadratic algebras
\begin{equation}
\label{eq:utq}
\bA_{\rT,q} \coloneqq \bT(V) / \langle \rR_{\rT,q} \rangle,
\qquad
\rR_{\rT,q} \coloneqq \langle v_i \otimes v_j - q \cdot v_j \otimes v_i \rangle_{i,j = 1}^n \subset V \otimes V
\end{equation}
parameterized by~$q \in \AA^1$.
It is easy to see that for each of this algebras the graded component~$(\bA_{\rT,q})_i$
has a basis formed by the monomials~$v_1^{i_1}\cdot v_2^{i_2} \cdot \dots \cdot v_n^{i_n}$, where~$\sum i_k = i$.
Therefore, $\dim(\bA_{\rT,q})_i = \binom{n+i-1}{i}$ is constant, hence~$\bA_{\rT,q}$ is a flat family of graded algebras.
Finally, $\bA_{\rT,1} \cong \Sym^2(V)$, while~$\bA_{\rT,-1} \cong \bB_{U_\rT}$, hence the first claim.

Similarly, if~$\{f_1,f_2,\dots,f_n\}$ is a basis in~$V^\vee$ then
\begin{equation*}
\bA_{\rT,q}^! \cong \bT(V^\vee) / \langle \rR_{\rT,q}^\perp \rangle,
\qquad
\rR_{\rT,q}^! \coloneqq
\langle f_i^2,\ f_i \otimes f_j + q \cdot f_j \otimes f_i \rangle_{i,j = 1}^n \subset V^\vee \otimes V^\vee,
\end{equation*}
and it is clear that~$(\bA_{\rT,q}^!)_i$ has a basis formed by the monomials~$f_{j_1}\cdot f_{j_2} \cdot \dots \cdot f_{j_i}$
where~$j_1 < j_2 < \dots < j_i$.
It follows that~$\dim(\bA_{\rT,q}^!)_i = \binom{n}{i}$ is constant,
and therefore~$\bA_{\rT,q}^!$ is a flat family of graded algebras
with~$\bA_{\rT,1}^! \cong \bw\bullet(V^\vee)$ and~$\bA_{\rT,-1}^! \cong \bB_{U_\rT}^!$, hence the second claim.
\end{proof}

\subsection{Artin--Schelter regularity}
\label{ss:as-regularity}

Recall that a graded algebra~$\bA = \bigoplus_{i = 0}^\infty \bA_i$ with~$\bA_0 = \kk$
is called {\sf Artin--Schelter regular of dimension~$d$} (see~\cite{AS87})
if every graded $\bA$-module has projective dimension at most~$d$,
Gelfand--Kirillov dimension of~$\bA$ is finite,
and~$\bA$ has the Gorenstein property
\begin{equation*}
\Ext^\bullet_\bA(\kk, \bA) \cong \kk(-\ell)[-d].
\end{equation*}
for some~$\ell \in \ZZ$ (we will say that~$\ell$ is the {\sf Gorenstein degree} of~$\bA$).

Artin--Schelter regularity of the algebra~$\bB_U$ for a minimal Clifford space was proved in~\cite[Proposition~7]{LB95}.
Below, we show that this property holds for all Clifford spaces.

\begin{proposition}
\label{prop:as-regularity}
If~$U \subset \Sym^2(V^\vee)$ is a $k$-dimensional space of quadrics satisfying the assumption~\eqref{eq:empty}
then~$\bB_U$ is Artin--Schelter regular of dimension~$k$ and Gorenstein degree~$2k - n$.
Moreover, the Hilbert series of~$\bB_U$ is
\begin{equation*}
\bh_{\bB_U}(z) = \frac1{(1-z)^{n}(1-z^2)^{k-n}}.
\end{equation*}
\end{proposition}

\begin{proof}
We prove that the simple~$\bB_U$-module~$\kk$ has a free graded resolution
\begin{equation}
\label{eq:self-dual}
0 \to F_k \to F_{k-1} \to \dots \to F_1 \to F_0 \to \kk \to 0,
\end{equation}
that has the self-duality property~$\Hom_{\bB_U}(F_{\bullet}, \bB_U) \cong F_{k-\bullet}(2k-n)$.
Note that the assumption~\eqref{eq:empty} implies that~$k \ge n$.
Let~$U_0 \subset U$ be a subspace of dimension~$n$ such that the assumption~\eqref{eq:empty} holds for~$U_0$,
so that~$(\P(U_0),\cB_0(V))$ is a minimal Clifford space.
Let~$m \coloneqq k - n$, choose a basis~$\xi_1,\dots,\xi_m$ of the vector space~$\Ker(U^\vee \to U_0^\vee)$,
and consider the Koszul resolution
\begin{equation*}
0 \to \cO_{\P(U)}(-m) \xrightarrow{\,\xi\,}
\cO_{\P(U)}(1-m)^{\oplus m} \xrightarrow{\,\xi\,}
\dots \xrightarrow{\,\xi\,}
\cO_{\P(U)}(-2)^{\oplus \binom{m}2} \xrightarrow{\,\xi\,}
\cO_{\P(U)}(-1)^{\oplus m} \xrightarrow{\,\xi\,}
\cO_{\P(U)} \to
\cO_{\P(U_0)} \to 0,
\end{equation*}
where the maps are given by wedge product with~$\xi = (\xi_1,\dots,\xi_m)$.
Tensoring it with~$\cB_0(V)$, considering the corresponding graded $\bB_U$-modules,
and taking into account that higher cohomology do not show up by Lemma~\ref{lem:cohomology-clifford},
we obtain an exact sequence
\begin{equation}
\label{eq:koszul-u}
0 \to
\bB_U(-2m) \xrightarrow{\,\xi\,}
\bB_U(2-2m)^{\oplus m} \xrightarrow{\,\xi\,}
\dots \xrightarrow{\,\xi\,}
\bB_U(-4)^{\oplus \binom{m}2} \xrightarrow{\,\xi\,}
\bB_U(-2)^{\oplus m} \xrightarrow{\,\xi\,}
\bB_U \to
\bB_{U_0} \to 0.
\end{equation}
On the other hand, consider the resolution~\eqref{eq:mck} of the simple module~$\kk$ over~$\bB_{U_0}$:
\begin{equation*}
0 \to
\bB_{U_0}(-n) \to
\bB_{U_0}(1-n)^{\oplus n} \to
\dots \to
\bB_{U_0}(-2)^{\oplus \binom{n}{2}} \to
\bB_{U_0}(-1)^{\oplus n} \to
\bB_{U_0} \to
\kk \to
0.
\end{equation*}
Replacing in it each term~$\bB_{U_0}(-i)^{\oplus \binom{n}i}$
by the direct sum of~$\tbinom{n}{i}$ copies of~\eqref{eq:koszul-u} twisted by~$-i$,
lifting the differentials of~\eqref{eq:mck} to a bicomplex structure,
and taking its totalization, we obtain a free resolution of~$\kk$ of length~$n + m = k$.
Since~\eqref{eq:mck} is self dual up to twist by~$n$ and~\eqref{eq:koszul-u} is self-dual up to twist by~$2m$,
we conclude that the obtained free resolution of~$\kk$ is self-dual up to twist by~$n + 2m = n + 2(k - n) = 2k - n$, as required.

Now we use resolution~\eqref{eq:self-dual} to compute~$\Ext^\bullet(\kk,\bB_U)$:
the self-duality property of~\eqref{eq:self-dual} implies that
\begin{equation*}
\Ext^\bullet_{\bB_U}(\kk, \bB_U) \cong \kk(n - 2k)[-k],
\end{equation*}
hence~$\bB_U$ has the required Gorenstein property.
On the other hand, \eqref{eq:self-dual} shows that the projective dimension of~$\kk$ equals~$k$,
hence the global dimension of~$\bB_U$ is also equal to~$k$.
Furthermore, using~\eqref{eq:koszul-u} we find~$\bh_{\bB_U}(z) \cdot (1 - z^2)^m = \bh_{\bB_{U_0}}(z)$,
and taking Lemma~\ref{lem:helix-min-cs} into account, we conclude that
\begin{equation*}
\bh_{\bB_U}(z) =
\frac{\bh_{\bB_{U_0}}(z)}{(1 - z^2)^m} =
\frac1{(1-z)^{n}(1-z^2)^m},
\end{equation*}
as required.
Finally, using the Hilbert series it is easy to see that the Gelfand--Kirillov dimension of~$\bB_U$ is finite,
and summarizing, we see that~$\bB_U$ is Artin--Schelter regular of dimension~$k$.
\end{proof}

\bibliographystyle{alpha}
\bibliography{clifford.bib}

\end{document}